\numberwithin{equation}{section}
\theoremstyle{theorem}
\newtheorem{theorem}{Theorem}[section]
\newtheorem*{theorem*}{Theorem}
\newtheorem{lemma}[theorem]{Lemma}
\newtheorem{fact}[theorem]{Fact}
\newtheorem{question}[theorem]{Question}
\providecommand{\customgenericname}{}
\newcommand{\newcustomtheorem}[2]{%
	\newenvironment{#1}[1]
	{%
		\renewcommand\customgenericname{#2}%
		\renewcommand\theinnercustomgeneric{##1}%
		\innercustomgeneric
	}
	{\endinnercustomgeneric}
}
\theoremstyle{definition}
\newtheorem{definition}[theorem]{Definition}
\newtheorem{example}[theorem]{Example}
\newtheorem*{example*}{Example}
\newtheorem*{examples*}{Examples}
\newtheorem{remark}[theorem]{Remark}
\newtheorem*{remark*}{Remark}
\newtheorem*{remarks*}{Remarks}
\newtheorem*{note*}{Note}
\newtheoremstyle{named}{}{}{\itshape}{}{\bfseries}{.}{.5em}{#1\thmnote{ #3}}
\theoremstyle{named}
\DeclareMathAlphabet{\mydutchcal}{U}{dutchcal}{m}{n}
\DeclareMathAlphabet{\myrsfso}{U}{rsfso}{m}{n}
\DeclareMathAlphabet{\mybbm}{U}{BOONDOX-ds}{m}{n}
\newcommand{\OEIS}[1]{\href{https://oeis.org/#1}{#1}}
\newcommand{\Fis}{\operatorname{Fis}}
\newcommand{\MatF}{\operatorname{FM}}
\newcommand{\MatP}{\operatorname{PM}}
\newcommand{\col}{\operatorname{col}}
\newcommand{\row}{\operatorname{row}}
\newcommand{\dist}{\operatorname{dist}}
\newcommand{\inv}{\operatorname{inv}}
\newcommand{\qbinom}[2]{{#1\brack #2}}
\newcommand{\stirlingb}[2]{{#1\brace #2}}
\newcommand{\sw}{v}
\newcommand{\bI}{\mathbf{I}}
\newcommand{\IPPM}{\operatorname{IPPM}}
\newcommand{\NDIPPM}{\operatorname{NDIPPM}}
\newcommand{\NDPM}{\operatorname{NDPM}}
\newcommand{\odd}{\operatorname{odd}}
\newcommand{\blk}{\operatorname{blk}}
\newcommand{\len}{\operatorname{len}}
\newcommand{\semilen}{\operatorname{semilen}}
\newcommand{\comp}{\operatorname{comp}}
\newcommand{\level}{\operatorname{level}}
\newcommand{\x}{\operatorname{x}}
\newcommand{\touch}{\operatorname{ret}}
\newcommand{\cM}{\myrsfso{M}}
\newcommand{\bbM}{\mathbb{M}}
\newcommand{\cD}{\myrsfso{D}}
\newcommand{\bbD}{\mathbb{D}}
\newcommand{\bfx}{\mathbf{x}}
\newcommand{\rmM}{\mathrm{M}}
\newcommand{\rU}{\mathrm{U}}
\newcommand{\rD}{\mathrm{D}}
\newcommand{\rL}{\mathrm{L}}
\newcommand{\lastddots}{%
	\raisebox{\dimexpr1ex-\height}{%
		$\displaystyle
		\raisebox{.5\height}{$\ddots$}
		$%
	}%
}
\title{Signed counting of partition matrices}
\author[S. Chern]{Shane Chern}
\address[S. Chern]{Fakult\"at f\"ur Mathematik, Universit\"at Wien, Oskar-Morgenstern-Platz 1, Wien 1090, Austria}
\email{chenxiaohang92@gmail.com, xiaohangc92@univie.ac.at}
\author[S. Fu]{Shishuo Fu}
\address[S. Fu]{{}\textsuperscript{(1)}College of Mathematics and Statistics \& Center for Discrete Mathematics, Chongqing University, Chongqing 401331, P.R. China \newline \indent{}\textsuperscript{(2)}Key Laboratory of Nonlinear Analysis and its Applications (Chongqing University), Ministry of Education, Chongqing 401331, P.R. China}
\email{fsshuo@cqu.edu.cn}
\date{}
\keywords{Fishburn matrix, partition matrix, inversion sequence, Stirling number of the second kind, Motzkin path, weighted Motzkin path, generating function, bijection.}
\subjclass[2020]{05A05, 05A15, 05A19}
\begin{document}
	
\sloppy

\begin{abstract}
	We prove that the signed counting (with respect to the parity of the ``$\operatorname{inv}$'' statistic) of partition matrices equals the cardinality of a subclass of inversion sequences. In the course of establishing this result, we introduce an interesting class of partition matrices called improper partition matrices. We further show that a subset of improper partition matrices is equinumerous with the set of Motzkin paths. Such an equidistribution is established both analytically and bijectively.
\end{abstract}

\maketitle

\section{Introduction}

\subsection{Background}\label{sec:background}

A \emph{Fishburn matrix} is an upper-triangular square matrix over nonnegative integers such that each row and column contains at least one nonzero entry. The \emph{weight} of such a matrix $M$, denoted by $w(M)$, is defined as the sum of all its entries. Also, the \emph{dimension} of $M$, denoted by $\dim(M)$, is the dimension of a square matrix in the usual sense (i.e., the number of rows/columns). For example, the following Fishburn matrix
\begin{align}\label{eq:F-Max-ex}
	\begin{pmatrix}
		1 & 0 & 2 & 0\\
		& 2 & 1 & 0\\
		&   & 0 & 1\\
		&   &   & 1
	\end{pmatrix}
\end{align}
has weight $8$ and dimension $4$. In this work, we denote by $\MatF_{n}$ the set of Fishburn matrices of weight $n$ and write $\MatF:= \cup_{n\ge 1} \MatF_n$.

Historically, this concept was first considered by Fishburn~\cite{Fis1983,Fis1985}, arising naturally from his earlier study of interval orders in \cite{Fis1970}. In recent years, Fishburn matrices have been connected to other combinatorial objects such as ascent sequences \cite{BCDK2010}, pattern-avoiding permutations \cite{BCDK2010,CYZ2019}, and pattern-avoiding insertion tables \cite{Lev2013}. In addition, their counting function, known as the \emph{Fishburn number}~\cite[\OEIS{A022493}]{OEIS}, plays a motivating role in topics including $q$-hypergeometric series \cite{AJ2014}, quantum modular forms \cite{Zag2010}, and Vassiliev invariants \cite{Sto1998,Zag2001}.

In 2011, Claesson, Dukes, and Kubitzke~\cite{CDK2011} introduced two matrix analogs for set partitions; the one we are interested in is the partition matrix.

Let $[n]:=\{1,2,\ldots,n\}$ throughout this paper.

\begin{definition}\label{def:pm}
	A \emph{partition matrix} $P$ on $[n]$ is an upper-triangular square matrix over the power set of $[n]$ such that
	\begin{itemize}[itemindent=*, leftmargin=*, itemsep=2pt, topsep=2pt, parsep=2pt]
		\item each row and column contains at least one nonempty subset of $[n]$;
		
		\item these nonempty subsets partition $[n]$;
		
		\item defining the \emph{column index} $\col(k)=\col_P(k)$ (resp.~\emph{row index} $\row(k)=\row_P(k)$) of $k\in [n]$ as the column (resp.~row) in $P$ that contains the subset including $k$, then for any two elements $i,j\in [n]$ with $i<j$, we always have $\col_P(i)\le \col_P(j)$.
	\end{itemize}
\end{definition}

For a partition matrix $P$ on $[n]$, we define its \emph{weight} $w(P)$ as $n$ and its \emph{dimension} by $\dim(P)$. As an example, the following matrix
\begin{align}\label{eq:P-Mat-ex}
	\begin{pmatrix}
		\{1\} & \varnothing & \{4,5\} & \varnothing\\
		& \{2,3\} & \{6\} & \varnothing\\
		& & \varnothing & \{8\}\\
		& & & \{7\}
	\end{pmatrix}
\end{align}
is a partition matrix on $\{1,2,\ldots,8\}$, and has weight $8$ and dimension $4$. Throughout, let $\MatP$ denote the set of all partition matrices and $\MatP_{n}$ the set of partition matrices on $[n]$.

Notably, partition matrices have a natural connection with Fishburn matrices. To be precise, given a partition matrix on $[n]$, if we switch its entries from the subsets of $[n]$ to the cardinalities of these subsets, then the resulting matrix is a Fishburn matrix of the same weight and dimension, just like the matrix in \eqref{eq:P-Mat-ex} reduces to that in \eqref{eq:F-Max-ex}. Such a matrix is called the \emph{induced Fishburn matrix} of our partition matrix.

On the other hand, unlike Fishburn numbers, which have no simple closed expression,\footnote{However, as shown by Zagier~\cite[p.~948, Theorem~1]{Zag2001}, the generating function for Fishburn numbers can still be simply expressed as $$\sum_{n\ge 0} \prod_{i=1}^n \big(1-(1-t)^i\big);$$ this is a typical example of quantum modular forms (after replacing $1-t$ with $t$).} the number of partition matrices of weight $n$ is the factorial of $n$. Recall that an \emph{inversion sequence} is of the form
\begin{align*}
	\text{$e := (e_1,e_2,\ldots,e_n)$ with $0\le e_i \le i-1$ for every $i$ with $1\le i\le n$};
\end{align*}
we call $n$ the \emph{length} of this inversion sequence, denoted by $\len(e)$. Let $\bI_n$ be the set of inversion sequences of length $n$. The following elegant result is due to Claesson, Dukes, and Kubitzke~\cite[p.~1626, Theorem~3]{CDK2011}.

\begin{theorem}[Claesson--Dukes--Kubitzke]\label{th:CDK}
	For every $n\ge 1$, there is a bijection $\Pi = \Pi_n$ between partition matrices of weight $n$ and inversion sequences of length $n$. As a consequence,
	\begin{align*}
		|\MatP_n| = n!.
	\end{align*}
\end{theorem}

The Claesson--Dukes--Kubitzke bijection $\Pi_n:\MatP_n\to \bI_n$ is \emph{natural} because of its simplicity in terms of construction. Precisely speaking, for a matrix $P\in \MatP_{n}$, the corresponding inversion sequence $\Pi_n(P)=(e_1,\ldots,e_n)$ is such that for each $j$ with $1\le j\le n$,
\begin{align*}
	e_j := \sum_{d=1}^{\row(j)-1} n_d,
\end{align*}
where $n_d$ denotes the total cardinality of the sets in the $d$-th column. Here the empty summation (i.e., when $\row(j)=1$) is understood to equal $0$ as usual.

\begin{example}\label{ex:CDK-bij}
	The following examples showcase how the Claesson--Dukes--Kubitzke bijection works between $\MatP_3$ and $\bI_3$:
	\begin{align*}
		\begin{pmatrix}
			\{1,2,3\}
		\end{pmatrix} & \mapsto (0,0,0), & \qquad\qquad \begin{pmatrix}
			\{1\} & \varnothing \\
			& \{2,3\}
		\end{pmatrix} & \mapsto (0,1,1),\\
		\begin{pmatrix}
			\{1\} & \{2\} \\
			& \{3\}
		\end{pmatrix} & \mapsto (0,0,1), & \begin{pmatrix}
			\{1,2\} & \varnothing \\
			& \{3\}
		\end{pmatrix} & \mapsto (0,0,2),\\
		\begin{pmatrix}
			\{1\} & \{3\} \\
			& \{2\}
		\end{pmatrix} & \mapsto (0,1,0), & \begin{pmatrix}
			\{1\} & \varnothing & \varnothing \\
			& \{2\} & \varnothing \\
			& & \{3\}
		\end{pmatrix} & \mapsto (0,1,2).
	\end{align*}
	As a concrete illustration, let us choose
	\begin{align*}
		P = \begin{pmatrix}
			\{1\} & \{3\} \\
			& \{2\}
		\end{pmatrix}\in \MatP_3.
	\end{align*}
	We have the row indices $\row(1)=1$, $\row(2)=2$, and $\row(3)=1$. And the total cardinality in the two columns is $n_1 = 1$ and $n_2 = 2$. Now for $\Pi_3(P) = (e_1, e_2, e_3) \in \bI_3$, the Claesson--Dukes--Kubitzke bijection gives us $e_1 = e_3 = 0$ (because of the empty sum) and $e_2 = n_1 = 1$.
\end{example}

\subsection{Main result and motivation}\label{sec:main-result}

To more closely tie partition matrices and Fishburn matrices, as well as the counting of them, we consider the following polynomial in $\mathbb{N}[q]$:
\begin{align}\label{eq:Sn-def}
	S_n(q) := \sum_{M\in \MatF_{n}} \prod_{j=1}^{\dim(M)} \qbinom{m_{1,j}+m_{2,j}+\cdots+m_{j,j}}{m_{1,j},m_{2,j},\ldots,m_{j,j}}_q,
\end{align}
where the sum runs over all Fishburn matrices $M:=(m_{i,j})_{i,j\ge 1}$ of weight $n$. Here the \emph{$q$-multinomial coefficients} are defined by
\begin{align*}
	\qbinom{a_1+\cdots+a_k}{a_1,\ldots,a_k}_q := \frac{[a_1+\cdots+a_k]_q!}{[a_1]_q!\cdots [a_k]_q!} \in \mathbb{N}[q],
\end{align*}
with $[0]_q!:=1$ and $[a]_q! := \prod_{i=1}^a \frac{1-q^i}{1-q}$ for any positive integer $a$. Meanwhile, given a partition matrix $P$ on $[n]$, a pair of numbers $(i,j)$ over $[n]$ is said to be an \emph{inversion} in $P$ if
\begin{itemize}[itemindent=*, leftmargin=*, itemsep=2pt, topsep=2pt, parsep=2pt]
	\item $i>j$;
	
	\item $\col(i)=\col(j)$;
	
	\item $\row(i)<\row(j)$.
\end{itemize}
We denote by $\inv(P)$ the number of inversions in $P$. According to a standard result on $q$-multinomial coefficients \cite[p.~41, Theorem~3.6]{And1998}, it is true that
\begin{align}\label{eq:Sn-def-2}
	S_n(q) = \sum_{P\in \MatP_{n}} q^{\inv(P)}.
\end{align}

Taking $q=0$ in \eqref{eq:Sn-def} and $q=1$ in \eqref{eq:Sn-def-2}, respectively, while for the former noting further that $\qbinom{a_1+\cdots+a_k}{a_1,\ldots,a_k}_0 = 1$, we have the relations:
\begin{align}
	S_n(0) &= |\MatF_{n}| = \Fis_n,\label{eq:S0}\\
	S_n(1) &= |\MatP_{n}| = n!.\label{eq:S1}
\end{align}
Here, $\Fis_n$ is the $n$-th Fishburn number. It is then natural to ask for an explicit expression of $S_n(q)$, at least at some typical values of $q$ other than zero and one.

The above discussion motivates us to consider the following \emph{signed} enumeration of partition matrices:
\begin{align}
	S_n(-1) = \sum_{P\in \MatP_{n}} (-1)^{\inv(P)}.
\end{align}
The initial values of this counting function are
\begin{align*}
	1, 2, 4, 10, 28, 88, 304, 1144, \ldots.
\end{align*}
After searching it in the On-Line Encyclopedia of Integer Sequences (OEIS, \cite{OEIS}), we find that this sequence coincides with \cite[\OEIS{A229046}]{OEIS}. This observation leads us to discover one of the main results in the present work.

\begin{theorem}\label{thm:q=-1}
	For every $n\ge 1$, the number $S_n(-1)$ equals the number of inversion sequences in $\bI_n(-,-,=)$, that is, inversion sequences $(e_1,\ldots,e_n)$ of length $n$ such that there is no triple $i < j < k$ with $e_i = e_k$.
\end{theorem}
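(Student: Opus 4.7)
The plan is to evaluate $S_n(-1)$ in three stages: (i) specialize the defining $q$-multinomial sum at $q=-1$ via a $q$-Lucas type identity, (ii) reinterpret the resulting nonnegative expression as the cardinality of the improper partition matrices announced in the abstract, and (iii) biject those objects (or an appropriate subfamily of them) with $\bI_n(-,-,=)$.

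For Step (i), I would begin from
\[ S_n(-1) = \sum_{M \in \MatF_n} \prod_{j=1}^{\dim(M)} \qbinom{m_{1,j}+\cdots+m_{j,j}}{m_{1,j},\ldots,m_{j,j}}_{-1}. \]
The $q$-Lucas theorem for $q$-multinomial coefficients tells us that at $q=-1$ the $j$-th factor vanishes whenever two or more of $m_{1,j},\ldots,m_{j,j}$ are odd, and otherwise equals the ordinary multinomial $\binom{\lfloor \Sigma_j/2 \rfloor}{\lfloor m_{1,j}/2 \rfloor,\ldots,\lfloor m_{j,j}/2 \rfloor}$, where $\Sigma_j = m_{1,j}+\cdots+m_{j,j}$. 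Consequently $S_n(-1)$ collapses to an honest nonnegative count: a weighted enumeration of Fishburn matrices in which every column carries at most one odd entry.

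For Step (ii), I would interpret this weighted count as $|\IPPM_n|$. The ordinary multinomial naturally enumerates the ways of distributing paired blocks of elements among the rows of a column, while an optional unpaired element in each column furnishes the single ``odd'' cell --- precisely the structural relaxation captured by the definition of an improper partition matrix. The identification with $S_n(-1)$ then reduces to matching these local placements with the partition-of-$[n]$ condition inherited from $\MatP_n$, which I expect can be done by a direct construction rather than a sign-reversing involution, since Step (i) has already killed the signs.

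Step (iii) provides the combinatorial bridge to $\bI_n(-,-,=)$: I would read each $e_i$ off the column index (or a closely related statistic) of $i$ in the improper partition matrix, so that the monotonicity axiom $\col(i)<\col(j)\Rightarrow i<j$ automatically enforces $0 \le e_i < i$, and then translate the forbidden pattern $i<j<k$ with $e_i = e_k$ into the local rule that no two ``returning'' elements may re-enter the same column after an intermediate element has departed from it --- exactly the structural feature imposed by the single-odd-entry-per-column constraint. The main obstacle is this final step: producing the map explicitly and verifying its inverse. Step (i) is a routine specialization once the $q$-Lucas evaluation is cited, and Step (ii) is largely a matter of isolating the right definition of $\IPPM_n$; the real combinatorial content lies in showing that the avoidance pattern $(-,-,=)$ corresponds, in a natural and invertible way, to the placement rules governing improper partition matrices, and in establishing this correspondence at the level of individual entries rather than merely at the level of cardinalities.
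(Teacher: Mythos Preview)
Your Steps~(i) and~(ii) are correct and constitute a genuinely different route to $S_n(-1)=|\IPPM_n|$ from the paper's.  The paper builds a sign-reversing involution $\Theta$ on $\MatP_n$ (swap $i$ and $i{+}1$ at the smallest \emph{proper} ascent or descent) and checks separately that every fixed point has even $\inv$.  Your approach kills the signs up front via $q$-Lucas and then reads off the surviving multinomial $\binom{\lfloor\Sigma_j/2\rfloor}{\lfloor m_{1,j}/2\rfloor,\ldots,\lfloor m_{j,j}/2\rfloor}$ as the number of ways to place the pairs $(c_{\min},c_{\min}+1),(c_{\min}+2,c_{\min}+3),\ldots$ (and the optional leftover $c_{\max}$) into the cells of column~$j$ with prescribed sizes --- which is exactly what the improperness condition allows.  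This is cleaner in that it avoids the parity bookkeeping of \eqref{MIP-pos}, at the cost of invoking an outside identity.

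Step~(iii), however, has a genuine gap.  Your proposed map $e_i=\col_P(i)-1$ (or any close variant) does not send $\IPPM_n$ to $\bI_n(-,-,=)$: already the one-dimensional matrix $(\{1,2,3\})\in\IPPM_3$ is sent to $(0,0,0)$, which contains the forbidden pattern $e_1=e_3$.  More seriously, the paper does \emph{not} establish $|\IPPM_n|=|\bI_n(-,-,=)|$ bijectively at all.  It proceeds analytically: Lemma~\ref{le:GF-MIP-MatP} rewrites $\sum_{Q\in\IPPM}z^{\sw(Q)}t^{w(Q)}$ in terms of $\sum_{P\in\MatP}z^{w(P)}t^{2w(P)}(1+t^{-1})^{\dim(P)}$; the Eulerian distribution of $\dim$ on $\MatP_n$ together with Frobenius' identity~\eqref{eq:Frob} then yields an explicit Stirling-number expansion, which is matched against the Cao--Jin--Lin formula~\eqref{eq:I-formula} for $\sum_{e\in\bI_n(-,-,=)}z^{\dist(e)}$ via the recurrence for $\stirlingb{n}{m}$.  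A direct bijection is explicitly posed as an open question in Section~\ref{sec:outlook} (and the footnote there notes that the Claesson--Dukes--Kubitzke map, which is close in spirit to yours, already fails on $\IPPM_3$).  So your plan for Step~(iii) would need to be replaced by the generating-function argument, or else you would be solving a problem the authors leave open.
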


\begin{remark}\label{rmk:q=-1}
	The numbers $|\bI_n(-,-,=)|$ give one of the combinatorial explanations of the sequence \cite[\OEIS{A229046}]{OEIS}; Martinez and Savage proposed this relation as an open problem in \cite[Section~2.13]{MS2018}. In the same work, Martinez and Savage also considered the ``$\dist$'' statistic for sequences in $\bI_n(-,-,=)$ counting \emph{distinct} elements in a given inversion sequence, and obtained a recurrence for the corresponding bi-parametric counting function. Later on, Cao, Jin, and Lin~\cite[p.~96, Proposition~7.2]{CJL2019}\footnote{In \cite{CJL2019}, the ``$\dist$'' statistic only counts distinct \emph{positive} elements, and hence it is one less than that of Martinez and Savage because $0$ always appears in an inversion sequence. Throughout this work, we adopt the convention of Martinez and Savage and therefore modify \eqref{eq:I-formula} from its original form in \cite{CJL2019} accordingly.} confirmed Martinez and Savage's hunch by establishing the following relation:
	\begin{align}\label{eq:I-formula}
		\sum_{e\in \bI_n(-,-,=)} z^{\dist(e)} = \sum_{j=1}^{n+1} (j-1)! \stirlingb{n+2-j}{j}z^{n+1-j},
	\end{align}
	where $\stirlingb{n}{m}$ are the \emph{Stirling numbers of the second kind}. Alternatively, as claimed by Heinz~\cite[A229046]{OEIS} and confirmed by Cao, Jin, and Lin~\cite[p.~96, Theorem~7.1]{CJL2019}, the number $|\bI_n(-,-,=)|$ also equals the number of set partitions of $[n+1]$ such that the absolute difference between the smallest elements of consecutive blocks is always greater than $1$.
\end{remark}

\subsection{Outline}\label{sec:outline}

We will take two steps to establish Theorem~\ref{thm:q=-1}:
\begin{enumerate}[label={\textup{(\arabic{*})}}, leftmargin=*, labelsep=5pt, align=left, itemsep=2pt, topsep=2pt, parsep=2pt]
	\item Find a subset of $\MatP_{n}$ such that its cardinality is given by $S_n(-1)$;
	
	\item Show that this subset is equinumerous with $\bI_n(-,-,=)$.
\end{enumerate}
In Section~\ref{sec:MIP}, we construct a subclass of partition matrices called \emph{improper partition matrices} in Definition~\ref{def:MIP} to fulfill our objective in Step~(1). Then in Section~\ref{sec:MIP-GF}, a bivariate generating function is established for our improper partition matrices. As shown in Theorem~\ref{th:MIP-I}, this generating function is identical to a bivariate generating function for the desired inversion sequences in $\bI_n(-,-,=)$, thereby completing Step~(2).

It is notable that improper partition matrices themselves are of independent research interest. Therefore, Section~\ref{sec:NIPM} is devoted to a further subset of improper partition matrices. This subset, surprisingly, is equinumerous with the set of Motzkin paths. In particular, such an equidistribution can be strengthened by accommodating two extra statistics on both sides; this subtle relation is presented in Theorem~\ref{th:NDIPPM-Motzkin}. Moreover, two proofs are provided, with one analytic and the other bijective. Both proofs build upon a combinatorial mechanism involving weighted Motzkin paths to be introduced in Section~\ref{sec:weighted-M}.

We close this paper with some questions for future research in Section~\ref{sec:outlook}.

\subsection{List of notation}

To facilitate the understanding of the paper, we list the necessary notation as follows. Here only typical notation is presented, and in particular, we omit subscripts indicating weight, length, etc. For example, we list $\MatF$ instead of $\MatF_n$ for the set of Fishburn matrices.

\begin{itemize}[
	itemindent=0pt, 
	leftmargin=2.75cm, 
	topsep=6pt, 
	itemsep=2pt,
	labelsep=0pt, 
	labelwidth=2.75cm,
	align=left
	]
	
	\item[\textbf{\textsf{Matrices}}] 
	
	\item[$\clubsuit$ \textit{Sets}]
	
	\item[$\MatF$] set of Fishburn matrices (Section~\ref{sec:background})
	
	\item[$\MatP$] set of partition matrices (Definition~\ref{def:pm})
	
	\item[$\IPPM$] set of improper partition matrices (Definition~\ref{def:MIP})
	
	\item[$\NDPM$] set of nondecreasing partition matrices (Definition~\ref{def:NDPM})
	
	\item[$\NDIPPM$] set of nondecreasing improper partition matrices (Section~\ref{sec:th-NDIPPM-strong})
	
	\item[$\blacklozenge$ \textit{Statistics}]
	
	\item[$w$] weight of a Fishburn or partition matrix (Section~\ref{sec:background})
	
	\item[$\dim$] dimension of a Fishburn or partition matrix (Section~\ref{sec:background})
	
	\item[$\row$, $\col$] row/column index of a number in a partition matrix (Definition~\ref{def:pm})
	
	\item[$\inv$] number of inversions in a partition matrix (Section~\ref{sec:main-result})
	
	\item[$v$] semi-weight of a partition matrix (Definition~\ref{def:semi-weight})
	
	\item[$\blk$] number of irreducible diagonal blocks in a nondecreasing partition matrix (Section~\ref{sec:CDK-block})
	
	\item[$\odd$] number of nonempty entries of an odd cardinality in a nondecreasing partition matrix (Section~\ref{sec:th-NDIPPM-strong})
	
	\item[\textbf{\textsf{Inversion sequences}}]
	
	\item[$\clubsuit$ \textit{Sets}]
	
	\item[$\bI$] set of inversion sequences (Section~\ref{sec:background})
	
	\item[$\bI(-,-,=)$] set of inversion sequences $e$ such that there is no triple $i < j < k$ with $e_i = e_k$ (Theorem~\ref{thm:q=-1})
	
	\item[$\blacklozenge$ \textit{Statistics}]
	
	\item[$\len$] length of an inversion sequence (Section~\ref{sec:background})
	
	\item[$\dist$] number of distinct elements in an inversion sequence (Remark~\ref{rmk:q=-1})
	
	\item[\textbf{\textsf{Lattice paths}}]
	
	\item[$\clubsuit$ \textit{Sets}]
	
	\item[$\bbD$] set of Dyck paths (Section~\ref{sec:CDK-block})
	
	\item[$\bbM$] set of Motzkin paths (Definition~\ref{def:M-path})
	
	\item[$\bbM^\dagger$] set of Motzkin paths with no level steps on the $x$-axis (Section~\ref{sec:NDIPPM-comb})
	
	\item[{$[\bbM]$}] set of restricted weighted Motzkin paths (Definition~\ref{def:weighted-M})
	
	\item[$\blacklozenge$ \textit{Statistics}]
	
	\item[$\semilen$] semilength of a Dyck path (Section~\ref{sec:CDK-block})
	
	\item[$\touch$] number of times a Dyck path returns to the $x$-axis (Section~\ref{sec:CDK-block})
	
	\item[$\len$] length of a Motzkin path (Section~\ref{sec:th-NDIPPM-strong})
	
	\item[$\level$] number of level steps on a Motzkin path (Section~\ref{sec:th-NDIPPM-strong})
	
	\item[$\level^{\x}$] number of level steps on a Motzkin path lying on the $x$-axis (Section~\ref{sec:th-NDIPPM-strong})
	
	\item[$\comp$] number of (down and level) steps on a Motzkin path ending on the $x$-axis (Section~\ref{sec:th-NDIPPM-strong})
	
	\item[$|\cdot|$] weight of a weighted Motzkin path (Definition~\ref{def:weighted-M-path})
	
	\item[$\odd$] number of odd weights in a weighted Motzkin path (Definition~\ref{def:weighted-M-path})
	
	\item[\textbf{\textsf{Mappings}}]
	
	\item[$\Pi$] the Claesson--Dukes--Kubitzke bijection from $\MatP$ to $\bI$ (Theorem~\ref{th:CDK})
	
	\item[$\Theta$] weight-preserving involution on $\MatP$ that fixes $\IPPM$ (Definition~\ref{def:Theta} and Theorem~\ref{thm:MIP=S(-1)})
	
	\item[$\Xi$] bijection from nondecreasing partition matrices to weighted Motzkin paths (Theorem~\ref{th:Xi})
	
	\item[$\Phi_n$] bijection from $[\bbM_{\ge 0}]_n$ to $\bbM_n$ (Theorem~\ref{th:NDIPPM-comb})
	
	\item[$\Omega_n$] bijection from $[\bbM_{\ge 0}^\dagger]_n$ to $[\bbM_{\ge 0}]_{n-2}$ (Lemma~\ref{le:key-dagger})
	
\end{itemize}

\section{Improper partition matrices}\label{sec:MIP}

As we have promised, in this section, we are going to determine a subset of partition matrices, the cardinality of which is counted by $S_n(-1)$. To begin with, we introduce further statistics on partition matrices. Then such a subset is precisely the set of fixed points under an involution $\Theta$ that we are going to construct.

Given a partition matrix $P$ on $[n]$, the element $i$ with $1\le i<n$ is called a \emph{descent} (resp.~an \emph{ascent}) in $P$ if the pair $(i,i+1)$ satisfies that
\begin{itemize}[itemindent=*, leftmargin=*, itemsep=2pt, topsep=2pt, parsep=2pt]
	\item $\col(i)=\col(i+1)$;
	
	\item $\row(i)>\row(i+1)$ (resp.~$\row(i)<\row(i+1)$).
\end{itemize}
And $(i,i+1)$ is said to form a \emph{descent pair} (resp.~an \emph{ascent pair}) in $P$.

\begin{definition}\label{def:MIP}
	Given a partition matrix $P\in \MatP$, a descent or an ascent $i$ of $P$ is said to be \emph{proper} if $i\equiv j\pmod 2$, where $j$ is the minimal element in the $\col(i)$-th column of $P$. Otherwise, it is said to be \emph{improper}. Let $\IPPM$ denote the subset of $\MatP$ consisting of all \emph{improper partition matrices},\footnote{This acronymic format of nomenclature, which is adopted throughout this work, is widely used in enumerative combinatorics. See, for example, the works of Andrews~\cite{And1994} and Stanley~\cite{Sta1986} for plane partitions, and the \textit{Annals} paper of Kuperberg~\cite{Kup2002} for alternating sign matrices.} which are partition matrices with every descent and ascent (if any) improper; we also denote by $\IPPM_{n}$ the subset of matrices in $\IPPM$ of weight $n$.
\end{definition}

For instance, in the earlier example, the matrix in \eqref{eq:P-Mat-ex} has an improper ascent at $5$ and a proper descent at $7$. As another example, among the six matrices in $\MatP_3$ in Example~\ref{ex:CDK-bij}, only the following four are improper:
\begin{align*}
	&\begin{psmallmatrix}
		\{1,2,3\}
	\end{psmallmatrix},\quad
	\begin{psmallmatrix}
		\{1,2\} & \varnothing \\
		& \{3\}
	\end{psmallmatrix},\quad
	\begin{psmallmatrix}
		\{1\} & \varnothing \\
		& \{2,3\}
	\end{psmallmatrix},\quad
	\begin{psmallmatrix}
		\{1\} & \varnothing & \varnothing \\
		& \{2\} & \varnothing \\
		& & \{3\}
	\end{psmallmatrix}.
\end{align*}

\begin{definition}\label{def:Theta}
	For every $n\ge 1$, let $\Theta$ be a mapping from $\MatP$ to itself, with the image $\Theta(P)$ constructed for any $P\in\MatP$ as follows:
	\begin{itemize}[itemindent=*, leftmargin=*, itemsep=2pt, topsep=2pt, parsep=2pt]
		\item if $P\in\IPPM$, then simply set $\Theta(P):=P$;
		\item if $P\in \MatP\setminus\IPPM$, find the smallest proper ascent or descent, say $i$, in $P$, and switch $i$ and $i+1$ to obtain $\Theta(P)$.
	\end{itemize}
\end{definition}

The following theorem indicates that $\IPPM$ is our desired subset for Step~(1) in Section~\ref{sec:outline}.

\begin{theorem}\label{thm:MIP=S(-1)}
	The mapping $\Theta: \MatP\to \MatP$ is a weight-preserving involution that fixes $\IPPM$, and for any $P\in\MatP\setminus\IPPM$, we have
	\begin{align}\label{Theta-sign}
		|\inv(P)-\inv(\Theta(P))|=1.
	\end{align}
	Moreover, for every $P\in\IPPM$,
	\begin{align}\label{MIP-pos}
		\inv(P)\equiv 0\pmod 2.
	\end{align} Consequently, we see that for every $n\ge 1$,
	\begin{align}\label{eq:S-1-MIP}
		|\IPPM_{n}|=S_n(-1).
	\end{align}
\end{theorem}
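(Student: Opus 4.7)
The theorem decomposes into four tasks: showing that $\Theta$ is a well-defined weight-preserving involution fixing exactly $\IPPM$; verifying that the inversion count changes by exactly $\pm 1$ on non-fixed points; checking that $\inv(P)$ is even for all $P\in\IPPM$; and combining these via a sign-reversing involution argument to conclude $|\IPPM_n|=S_n(-1)$. The plan is to handle these assertions in turn.

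For the first, the swap of $i$ and $i+1$ clearly preserves the weight and the partition-matrix axioms, since the two elements lie in the same column and only their cells within that column are exchanged; in particular the column-ordering condition $\col(i)<\col(j)\Rightarrow i<j$ is unaffected. To see $\Theta$ is involutive, one observes that the swap alters the cell of no element other than $i$ and $i+1$, so every pair $(k,k+1)$ with $k\le i-2$ retains its ascent/descent status in $\Theta(P)$; and at the one borderline position $k=i-1$, the parity relation $i\equiv j\pmod 2$ forces $i-1\not\equiv j\pmod 2$, so any ascent or descent at $i-1$ in $\Theta(P)$ is automatically improper. Hence the smallest proper ascent/descent of $\Theta(P)$ is still $i$, and applying $\Theta$ a second time restores $P$.

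For the second, a direct pair-by-pair inspection suffices: the swap toggles the inversion status of the pair $\{i,i+1\}$, while for any other element $k$ in column $\col(i)$ the two pairs $\{k,i\}$ and $\{k,i+1\}$ merely interchange their inversion statuses, leaving their joint contribution unchanged. Pairs not in column $\col(i)$ are untouched. This gives $|\inv(P)-\inv(\Theta(P))|=1$, and in particular $(-1)^{\inv(P)}=-(-1)^{\inv(\Theta(P))}$.

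The third is the crux. First, the ordering axiom forces the entries of each column to constitute a consecutive integer interval $\{j,j+1,\dots,j+m-1\}$, where $j$ is the column minimum. The improper condition then demands, for every index $l$ with $2l\le m$, that $j+2l-2$ and $j+2l-1$ lie in a common cell (otherwise the smaller one, which has the same parity as $j$, would be a proper ascent/descent). Each column thus decomposes into ``bonded pairs'' each filling a single cell, plus possibly a trailing unpaired element when $m$ is odd. Counting inversions within a column, two distinct bonded pairs contribute $0$ or $4$ inversions against each other (the row index is constant on each pair), a bonded pair and the trailing singleton contribute $0$ or $2$, and no other configuration arises; summing over all columns yields $\inv(P)\equiv 0\pmod 2$. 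Finally, pairs $\{P,\Theta(P)\}$ with $P\notin\IPPM_n$ cancel in the signed sum defining $S_n(-1)$, while each fixed point in $\IPPM_n$ contributes $+1$, yielding $|\IPPM_n|=S_n(-1)$. The most subtle step will be this third one: one must first notice that columns carry consecutive-integer supports and then correctly identify the cell-bonding that the improper condition forces on the even-parity-positioned entries.
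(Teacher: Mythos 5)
Your proposal is correct, and its overall architecture---weight-preserving involution, the $\pm 1$ change of $\inv$ on non-fixed points, evenness of $\inv$ on $\IPPM$, then the sign-reversing cancellation---coincides with the paper's; your verifications that $\Theta$ is a well-defined involution and that the swap changes $\inv$ by exactly one are essentially the paper's pair-by-pair checks (your parity argument at the borderline position $k=i-1$ is in fact slightly more explicit than the paper's one-line claim that the smallest proper ascent/descent is preserved). The genuinely different step is your proof of \eqref{MIP-pos}. The paper argues locally: given an inversion $(i,j)$, it examines the maximal run of consecutive integers containing $j$ inside its cell and uses improperness at both ends of the run to show the run has even length, so the inversions $(i,\cdot)$ fall into classes of even size. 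You instead prove a global structural statement: each column is a consecutive interval $\{j,\dots,j+m-1\}$, improperness forces $j+2l-2$ and $j+2l-1$ into a common cell for every $2l\le m$, and then inversions are counted blockwise ($0$ or $4$ between two bonded pairs, since the row index is constant on each pair, and $0$ or $2$ against the trailing singleton when $m$ is odd). Both arguments are sound; yours is arguably cleaner, and the bonded-pair structure you isolate is precisely the characterization the paper itself records and reuses at the start of the proof of Lemma~\ref{le:GF-MIP-MatP} (that in the reduced matrix of any $Q\in\IPPM$, each even element $2i$ shares its row with $2i-1$), so your route proves that later ingredient en passant, whereas the paper's run-based argument is more minimal, needing only the run containing the inverted element. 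One cosmetic remark: in your involutivity step, the ascent/descent status at $i+1$ can also change under the swap, not only at $i-1$; this is harmless, since only positions smaller than $i$ matter for identifying the smallest proper ascent or descent, and any ascent or descent arising at $i\pm 1$ is improper by parity, exactly as you note.
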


\begin{proof}
	It is trivial that for $P\in \IPPM$, the mapping $\Theta$ is a weight-preserving involution because $\Theta(P)$ is $P$ itself. Meanwhile, we note that for any $P\in\MatP\setminus\IPPM$, the smallest proper descent (resp.~ascent) in $P$ becomes the smallest proper ascent (resp.~descent) in $\Theta(P)$. It should then be clear from the construction of $\Theta$ that
	\begin{align*}
		w(P)=w(\Theta(P))
	\end{align*}
	and
	\begin{align*}
		\Theta(\Theta(P))=P.
	\end{align*}
	As such, $\Theta$ preserves weight and is an involution in both cases.
	
	Now we examine the pair $(P,\Theta(P))$ for $P\in\MatP\setminus\IPPM$. According to the definition of $\Theta$, the only difference between $P$ and $\Theta(P)$ is that there exists a unique pair $(i,i+1)$ so that their positions in the matrices are switched going from $P$ to $\Theta(P)$. We first suppose that $i$ is a proper ascent in $P$; then it becomes a proper descent in $\Theta(P)$. Moreover, for any third element $j$ with $j<i$ in the $\col_P(i)$-th column such that $\row_P(j)<\row_P(i)$ and $\row_P(j)<\row_P(i+1)$, we also have $\row_{\Theta(P)}(j) < \row_{\Theta(P)}(i)$ and $\row_{\Theta(P)}(j) < \row_{\Theta(P)}(i+1)$ because $\row_{\Theta(P)}(j) = \row_{P}(j)$ while $\row_{\Theta(P)}(i) = \row_P(i+1)$ and $\row_{\Theta(P)}(i+1) = \row_P(i)$ by our construction. Hence, neither of the pairs $(i,j)$ and $(i+1,j)$ form an inversion in $P$ and $\Theta(P)$. Other possibilities of elements $j\not\in \{i,i+1\}$ in the $\col_P(i)$-th column and possibilities of the row index $\row_P(j)$ can be individually checked in the same way. We conclude that $\inv(P)+1=\inv(\Theta(P))$ in this scenario. The case of $(i,i+1)$ being a proper descent in $P$ can be argued in a similar fashion, and the relation $\inv(P)-1=\inv(\Theta(P))$ follows. We have proven \eqref{Theta-sign}.
	
	Next, take any improper matrix $P$. If $\inv(P)=0$, then \eqref{MIP-pos} trivially holds true. Otherwise, suppose $(i,j)$ is an inversion in the $c$-th column of $P$ so that $i>j$ and $\row_P(i)<\row_P(j)$. Let $S$ be the subset that contains $j$. We further assume that 
	$$(j-\alpha,\ldots,j-1,j,j+1,\ldots,j+\beta)$$
	is the maximal sequence of consecutive integers containing $j$ inside $S$ for certain $\alpha,\beta\ge 0$. On one hand, $j-\alpha$ must have the same parity as the smallest element, say $c_{\min}$, in the $c$-th column; otherwise, $j-\alpha-1$, which shares the same parity with $c_{\min}$, also lies in the $c$-th column and becomes a proper ascent or descent, thereby resulting in a contradiction. On the other hand, $j+\beta$ is not the largest element in the $c$-th column; otherwise, we cannot find the element $i$ to create our inversion $(i,j)$. Thus, $j+\beta+1$ is also in the $c$-th column, and in particular, $\row_P(j+\beta)\neq\row_P(j+\beta+1)$, implying that $j+\beta$ is either an ascent or a descent. Now since $P$ is improper, $j+\beta$ must be improper, thereby having a different parity to $c_{\min}$, and hence to $j-\alpha$. We are then led to the fact that 
	$$|\{j-\alpha,\ldots,j,\ldots,j+\beta\}|\equiv 0\pmod 2.$$
	Consequently, each of $(i,j-\alpha),\ldots,(i,j),\ldots,(i,j+\beta)$ is an inversion pair, contributing collectively an even count to $\inv(P)$, so we obtain \eqref{MIP-pos} accordingly.
	
	Finally, we note that due to \eqref{Theta-sign}, each pair $(P,\Theta(P))$ jointly contributes zero in the signed counting $S_n(-1)$ provided that $P\in\MatP\setminus\IPPM$, while \eqref{MIP-pos} tells us that every improper matrix in $\IPPM$ contributes one in the calculation of $S_n(-1)$. These two facts imply \eqref{eq:S-1-MIP} and the proof is now completed.
\end{proof}

\begin{remark}\label{rmk:no-bij}
	It is clear from \eqref{eq:S-1-MIP} that Theorem~\ref{thm:q=-1} is equivalent to
	\begin{align}\label{eq:IPPMn-In}
		|\IPPM_{n}| = |\bI_n(-,-,=)|.
	\end{align}
	Recall that the Claesson--Dukes--Kubitzke bijection $\Pi$ maps partition matrices to inversion sequences. Therefore, a natural question is how this mapping behaves when it is restricted to improper partition matrices. Unfortunately, we have
	\begin{align*}
		\Pi(\IPPM_{n}) \ne \bI_n(-,-,=).
	\end{align*}
	As an instance, in Example~\ref{ex:CDK-bij}, we have shown
	\begin{align*}
		\Pi\begin{pmatrix}
			\{1,2,3\}
		\end{pmatrix} = (0,0,0) =: (e_1, e_2, e_3),
	\end{align*}
	while the latter is \emph{not} in $\bI_3(-,-,=)$ because $e_1=e_3$. This observation indicates that even if there exists a bijection between $\IPPM_{n}$ and $\bI_n(-,-,=)$, it cannot be as simple as the one of Claesson--Dukes--Kubitzke. In addition, by checking the objects in $\IPPM_{n}$ and $\bI_n(-,-,=)$ for some slightly larger $n$, we believe that this provisional bijection should be much more involved.
\end{remark}

\section{Proof of Theorem~\ref{thm:q=-1}}\label{sec:MIP-GF}

As we have discussed in Remark~\ref{rmk:no-bij}, a bijective treatment of \eqref{eq:IPPMn-In} is out of reach. For the moment, we turn to an analytic strategy to establish \eqref{eq:IPPMn-In}. Our proof builds upon a bivariate generating function for improper partition matrices, in which we also include the semi-weight statistic to be introduced below.

\begin{definition}\label{def:semi-weight}
	The \emph{semi-weight} of a partition matrix $P\in \MatP$ is defined as
	\begin{align*}
		\sw(P) := \sum_{d=1}^D \left\lceil \frac{n_d}{2} \right\rceil,
	\end{align*}
	where $D$ is the dimension of $P$, and each $n_d$ denotes the total cardinality of the sets in the $d$-th column.
\end{definition}

\begin{lemma}\label{le:GF-MIP-MatP}
	We have
	\begin{align}\label{eq:GF-MIP-MatP}
		\sum_{Q\in \IPPM} z^{\sw(Q)} t^{w(Q)} = \sum_{P\in \MatP} z^{w(P)} t^{2w(P)} (1+t^{-1})^{\dim(P)}.
	\end{align}
\end{lemma}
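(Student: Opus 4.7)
The plan is to prove \eqref{eq:GF-MIP-MatP} via a weight-preserving bijection. The key structural fact to isolate first is that the constraint $\col(i) < \col(j) \Rightarrow i < j$ in the definition of a partition matrix forces every column to consist of a block of consecutive integers: if column $d$ has $n_d$ entries, then it equals $\{c_d, c_d+1, \ldots, c_d+n_d-1\}$ for some $c_d$.

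Next I would describe the canonical shape of an arbitrary improper matrix $Q \in \IPPM$ column by column. Within column $d$, a descent or ascent at $c_d+k$ (for $0 \le k < n_d - 1$) corresponds precisely to $c_d+k$ and $c_d+k+1$ lying in different rows; such a transition is proper iff $k$ is even, so improperness forces every transition to occur at odd $k$. Hence the column decomposes canonically into \emph{units}: the pairs $\{c_d+2j, c_d+2j+1\}$ for $0 \le j < \lfloor n_d/2 \rfloor$, together with a singleton $\{c_d+n_d-1\}$ when $n_d$ is odd. Each unit lies entirely in a single row, the rows of distinct units in one column may differ arbitrarily, and the number of units in column $d$ equals $\lceil n_d/2 \rceil$.

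Using this structure, I would define
$$\Phi \colon \IPPM \longrightarrow \bigl\{(P, S) : P \in \MatP,\ S \subseteq [\dim(P)]\bigr\}$$
as follows. Given $Q$, collapse each unit to a single super-element in the same row and column, and relabel the super-elements by $1, 2, \ldots, \sum_d \lceil n_d/2 \rceil$ in the natural order (earlier columns first, smaller integers first within a column) to obtain a partition matrix $P$; then set $S := \{d : n_d \text{ is odd}\}$. Conversely, from $(P, S)$ we reconstruct $Q$ by expanding each super-element into a pair of consecutive integers, except that for $d \in S$ the largest-labeled super-element in column $d$ is expanded into a single integer, and then relabeling consecutively. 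I would verify that both maps produce valid objects; in particular, the reconstructed $Q$ is improper because transitions in column $d$ now occur only at the junctions between consecutive units, all sitting at odd offsets $c_d + 2j - 1$ from $c_d$.

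Finally, $\Phi$ satisfies $\sw(Q) = w(P)$ (total units equal the number of super-elements) and $w(Q) = 2 w(P) - |S|$ (each pair contributes $2$, each singleton contributes $1$, and singletons correspond exactly to elements of $S$). Summing over the image therefore yields
\begin{align*}
\sum_{Q \in \IPPM} z^{\sw(Q)} t^{w(Q)} = \sum_{P \in \MatP} z^{w(P)} t^{2w(P)} \sum_{S \subseteq [\dim(P)]} t^{-|S|} = \sum_{P \in \MatP} z^{w(P)} t^{2w(P)} (1 + t^{-1})^{\dim(P)}.
\end{align*}
The main obstacle is securing the unit decomposition: one must check both that every pair $\{c_d+2j, c_d+2j+1\}$ is forced into a common row by the improperness condition, and that, once this pairing is imposed, no further constraint arises on the rows of distinct units beyond the axioms of a partition matrix. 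Once this structural claim is in hand, the construction of $\Phi$ and the weight tracking are routine.
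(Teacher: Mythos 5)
Your proposal is correct and takes essentially the same route as the paper: the paper builds the identical correspondence in the opposite direction, doubling every element of the reduction of $P\in\MatP$ (so each $k$ becomes the pair $2k-1,2k$, your ``units'') and then optionally deleting the largest element of each column, giving $2^{\dim(P)}$ improper matrices --- precisely the inverse of your collapsing map $\Phi$ with $S$ recording the columns of odd cardinality. Your weight bookkeeping $\sw(Q)=w(P)$ and $w(Q)=2w(P)-|S|$ reproduces the paper's factor $\sum_{i=0}^{\dim(P)}\binom{\dim(P)}{i}t^{-i}=(1+t^{-1})^{\dim(P)}$ verbatim.
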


To prepare for the proof of this lemma, we define the \emph{reduction} of partition matrices $P\in \MatP$ in the following way. First, it is clear from Definition~\ref{def:pm} that the $d$-th column gives a set partition for integers in the interval $[\sum_{i=1}^{d-1} n_i+1,\sum_{i=1}^{d-1} n_i+n_d]$. Now we create a new square matrix by subtracting $\sum_{i=1}^{d-1} n_i$ from every element in the $d$-th column for every $d$ with $1\le d\le D$; by doing so, the $d$-th column simply becomes a set partition of $[n_d]$. Such a new matrix, denoted by $P^-$, is called the \emph{reduced matrix} of $P$. For example, the matrix in \eqref{eq:P-Mat-ex} is reduced to
\begin{align}\label{eq:RD-Mat-ex}
	\begin{pmatrix}
		\{1\} & \varnothing & \{1,2\} & \varnothing\\
		& \{1,2\} & \{3\} & \varnothing\\
		& & \varnothing & \{2\}\\
		& & & \{1\}
	\end{pmatrix}.
\end{align}
Clearly, there is a one-to-one correspondence between partition matrices and their reductions.

\begin{proof}[Proof of Lemma~\ref{le:GF-MIP-MatP}]
	It has been indicated in the proof of Theorem~\ref{thm:MIP=S(-1)} that given any $Q\in \IPPM \subset \MatP$, in every column of the reduced matrix $Q^-$, all even elements $2i$ must be placed in the same row as $2i-1$.
	
	Now we map every $P\in \MatP$ to $2^{\dim(P)}$ matrices in $\IPPM$ as follows. First, in every column of the reduction $P^-$, we replace each number $k$ by two numbers $2k-1$ and $2k$. For example, the reduced matrix in \eqref{eq:RD-Mat-ex} becomes
	\begin{align*}
		\begin{pmatrix}
			\{1,2\} & \varnothing & \{1,2,3,4\} & \varnothing\\
			& \{1,2,3,4\} & \{5,6\} & \varnothing\\
			& & \varnothing & \{3,4\}\\
			& & & \{1,2\}
		\end{pmatrix}
	\end{align*}
	after this procedure. It is notable that in the new matrix, there are an even number of elements in each column. Next, in each column, we may either preserve the largest element or remove it; in total, there are $2^{\dim(P)}$ ways to do so. More importantly, every matrix derived in this way is the reduction of a matrix in $\IPPM$. Also, the reduction of every matrix in $\IPPM$ can be uniquely created as above.
	
	After recovering the corresponding $2^{\dim(P)}$ matrices in $\IPPM$, it is clear that their semi-weights are all equal to the weight of $P$. Therefore,
	\begin{align*}
		\sum_{Q\in \IPPM} z^{\sw(Q)} t^{w(Q)} = \sum_{P\in \MatP} z^{w(P)} t^{2w(P)} \sum_{i=0}^{\dim(P)} \binom{\dim(P)}{i} t^{-i},
	\end{align*}
	which implies the desired relation.
\end{proof}

It has been shown by Claesson, Dukes, and Kubitzke~\cite[p.~1627, Corollary~5]{CDK2011} that the ``$\dim$'' statistic on $\MatP_{n}$ is Eulerian.\footnote{Note that $\dim(P)$ for $P\in \MatP_{n}$ is supported on $[1,n]$ instead of $[0,n-1]$. This explains the extra factor $x$ on the right-hand side of \eqref{eq:dim-Eulerian}.} In other words,
\begin{align}\label{eq:dim-Eulerian}
	\sum_{P\in \MatP} x^{\dim(P)} t^{w(P)} = \sum_{n\ge 1} x E_n(x) t^n,
\end{align}
where $E_n(x)$ is the $n$-th \emph{Eulerian polynomial}. In addition, according to an identity of Frobenius~\cite{Fro1910} (see also \cite[p.~1, eq.~(3)]{FS1970}), we have
\begin{align}\label{eq:Frob}
	E_n(x) = \sum_{j=1}^n j! \stirlingb{n}{j} (x-1)^{n-j}.
\end{align}
Therefore,
\begin{align}\label{eq:MatP-gf-dim}
	\sum_{P\in \MatP} x^{\dim(P)} t^{w(P)} = \sum_{n\ge 1} \sum_{j=1}^n j! \stirlingb{n}{j} x (x-1)^{n-j} t^n.
\end{align}

Toward a proof of Theorem~\ref{thm:q=-1}, we need the following equidistribution between $\IPPM_{n}$ and $\bI_n(-,-,=)$.

\begin{theorem}\label{th:MIP-I}
	For every $n\ge 1$,
	\begin{align}\label{eq:MIP-I}
		\sum_{Q\in \IPPM_{n}} z^{\sw(Q)} = \sum_{e\in \bI_n(-,-,=)} z^{\dist(e)}.
	\end{align}
\end{theorem}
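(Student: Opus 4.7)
The plan is to reduce Theorem~\ref{th:MIP-I} to an equality of two explicit polynomials in $z$: the generating polynomial on the left is produced by combining Lemma~\ref{le:GF-MIP-MatP} with the Eulerian expansion \eqref{eq:MatP-gf-dim}, while the one on the right is the Cao--Jin--Lin formula \eqref{eq:I-formula} already recorded in the introduction. No new combinatorial construction is needed; everything is driven by the generating functions already in hand.

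First I would observe that the right-hand side of \eqref{eq:GF-MIP-MatP} is precisely the bivariate series $\sum_{P} x^{\dim(P)} t^{w(P)}$ evaluated at $t \mapsto zt^2$ and $x \mapsto 1 + t^{-1}$. Substituting into \eqref{eq:MatP-gf-dim} and using $x - 1 = t^{-1}$, the right-hand side collapses to
$$\sum_{Q\in\IPPM} z^{\sw(Q)}t^{w(Q)} = \sum_{n\ge 1}\sum_{j=1}^n j!\stirlingb{n}{j} z^n \bigl(t^{n+j} + t^{n+j-1}\bigr).$$
Extracting $[t^m]$, the two powers $t^{n+j}$ and $t^{n+j-1}$ force $j = m-n$ and $j = m-n+1$ respectively. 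Reindexing with $i := m - n$ in both cases yields
$$\sum_{Q\in\IPPM_{m}}z^{\sw(Q)} = \sum_{i\ge 0}\Bigl(i!\stirlingb{m-i}{i} + (i+1)!\stirlingb{m-i}{i+1}\Bigr) z^{m-i},$$
with the standard conventions for Stirling numbers at the boundary.

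Next, I would rewrite the Cao--Jin--Lin formula \eqref{eq:I-formula} under the substitution $i := j-1$, obtaining
$$\sum_{e\in\bI_m(-,-,=)} z^{\dist(e)} = \sum_{i\ge 0} i!\stirlingb{m+1-i}{i+1}z^{m-i}.$$
Comparing coefficients of $z^{m-i}$, the theorem reduces to the single identity
$$\stirlingb{m-i}{i} + (i+1)\stirlingb{m-i}{i+1} = \stirlingb{m+1-i}{i+1},$$
which is nothing but the usual recurrence $\stirlingb{n}{k} = k\stirlingb{n-1}{k} + \stirlingb{n-1}{k-1}$ applied with $n = m+1-i$ and $k = i+1$.

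The argument is essentially a clean generating-function manipulation capped by a one-line Stirling recursion, so I anticipate no serious obstacle; the main care will be in bookkeeping the index ranges $1 \le j \le n$ under the two reindexings, and verifying that the boundary cases $i = 0$, $i = \lfloor m/2 \rfloor$, and $i = \lfloor (m-1)/2 \rfloor$ are absorbed correctly into the uniform formula via the conventions $\stirlingb{m}{0}=0$ and $\stirlingb{n}{k}=0$ for $k>n$.
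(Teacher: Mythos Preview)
Your proposal is correct and follows essentially the same route as the paper: both combine Lemma~\ref{le:GF-MIP-MatP} with \eqref{eq:MatP-gf-dim}, extract the coefficient of $t^N$, and then match against the Cao--Jin--Lin formula \eqref{eq:I-formula} via the Stirling recurrence $\stirlingb{n}{k}=k\stirlingb{n-1}{k}+\stirlingb{n-1}{k-1}$. The only difference is cosmetic---you compare coefficients of $z^{m-i}$ directly, whereas the paper expands the full sum and applies the recurrence inside the summation---so your index bookkeeping concern is really the only point requiring care, and it is handled correctly by the vanishing conventions you note.
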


\begin{proof}
	It follows from \eqref{eq:GF-MIP-MatP} and \eqref{eq:MatP-gf-dim} that
	\begin{align}\label{eq:IPPM-gf-zt}
		\sum_{Q\in \IPPM} z^{\sw(Q)} t^{w(Q)} = \sum_{n\ge 1} \sum_{j=1}^n j! \stirlingb{n}{j} z^{n} (1+t^{-1}) t^{n+j}.
	\end{align}
	Hence, by isolating the coefficient of $t^N$ on the right-hand side of \eqref{eq:IPPM-gf-zt}, we have
	\begin{align*}
		\sum_{Q\in \IPPM_{N}} z^{\sw(Q)} = \sum_{j=1}^{N-1} j! \stirlingb{N-j}{j} z^{N-j} + \sum_{j=1}^{N} j! \stirlingb{N+1-j}{j} z^{N+1-j}.
	\end{align*}
	In view of \eqref{eq:I-formula}, it suffices to show that for every $N\ge 1$,
	\begin{align*}
		&\sum_{j=1}^{N+1} (j-1)! \stirlingb{N+2-j}{j} z^{N+1-j}\\
		&\qquad=\sum_{j=1}^{N-1} j! \stirlingb{N-j}{j} z^{N-j} + \sum_{j=1}^{N} j! \stirlingb{N+1-j}{j} z^{N+1-j}.
	\end{align*}
	Recall the following recurrence for the Stirling numbers of the second kind~\cite[p.~625, eq.~(26.8.22)]{Bre2010}:
	\begin{align*}
		\stirlingb{n}{m} = m \stirlingb{n-1}{m} + \stirlingb{n-1}{m-1}.
	\end{align*}
	Thus,
	\begin{align*}
		&\sum_{j=1}^{N+1} (j-1)! \stirlingb{N+2-j}{j} z^{N+1-j}\\
		&\quad = N! \stirlingb{1}{N+1} + \sum_{j=1}^{N} (j-1)! \left(j\stirlingb{N+1-j}{j} + \stirlingb{N+1-j}{j-1}\right)z^{N+1-j}\\
		&\quad = \sum_{j=1}^{N} j! \stirlingb{N+1-j}{j} z^{N+1-j} + \sum_{j=1}^{N} (j-1)! \stirlingb{N+1-j}{j-1} z^{N+1-j}\\
		&\quad = \sum_{j=1}^{N} j! \stirlingb{N+1-j}{j} z^{N+1-j} + \sum_{j=1}^{N-1} j! \stirlingb{N-j}{j} z^{N-j},
	\end{align*}
	where we have used the vanishing of $\stirlingb{n}{m}$ when $m=0$ or $m>n$. Our proof is therefore completed.
\end{proof}

Now we need one last push for Theorem~\ref{thm:q=-1}.

\begin{proof}[Proof of Theorem~\ref{thm:q=-1}]
	In \eqref{eq:MIP-I}, we take $z=1$ to get $|\IPPM_{n}| = |\bI_n(-,-,=)|$. Then Theorem~\ref{thm:q=-1} is a direct consequence of \eqref{eq:S-1-MIP}.
\end{proof}

\section{Nondecreasing improper partition matrices}\label{sec:NIPM}

This section is devoted to an interesting subset of improper partition matrices. To begin with, we define partition matrices that are nondecreasing, the concept of which was introduced by Claesson, Dukes, and Kubitzke in \cite[p.~1628, Section~2.2]{CDK2011}.

\begin{definition}\label{def:NDPM}
	Given a partition matrix $P$ on $[n]$, we say it is \emph{nondecreasing} if for any two elements $i,j\in [n]$ with $i<j$, we always have $\row_P(i)\le \row_P(j)$ and $\col_P(i)\le \col_P(j)$.
\end{definition}

Now we count the number of nondecreasing improper partition matrices of a fixed weight, and the enumerations read:
\begin{align*}
	1, 2, 4, 9, 21, 51, 127, 323, \ldots
\end{align*}
It is notable that these values appear in the sequence of \emph{Motzkin numbers}, given in the OEIS entry \cite[\OEIS{A001006}]{OEIS}. This observation leads us to a simplified version of the main result in the present section.

\begin{theorem}\label{th:NDIPPM-Motzkin-simple}
	For every $n\ge 1$, the number of nondecreasing improper partition matrices of weight $n$ equals the $n$-th Motzkin number.
\end{theorem}

\subsection{The Claesson--Dukes--Kubitzke bijection revisited}\label{sec:CDK-block}

In \cite[p.~1628, Theorem~8]{CDK2011}, the mapping $\Pi$ of Claesson--Dukes--Kubitzke is applied to nondecreasing partition matrices, resulting in a bijective correspondence to nondecreasing inversion sequences.

Recall that a \emph{Dyck path} $\cD$ of \emph{semilength} $n$ is a lattice path starting from $(0,0)$, ending at $(2n,0)$, and never falling below the $x$-axis such that only \emph{up} ($\nearrow$) steps $(1,1)$ and \emph{down} ($\searrow$) steps $(1,-1)$ are used; we write $\semilen(\cD) := n$. Such paths are enumerated by the $n$-th \emph{Catalan number} $\operatorname{Cat}_n := \frac{1}{n+1} \binom{2n}{n}$, listed in the OEIS entry \cite[\OEIS{A000108}]{OEIS}. Let $\bbD_n$ denote the set of Dyck paths of semilength $n$ and write $\bbD := \cup_{n\ge 1} \bbD_n$.

A bijection between nondecreasing inversion sequences of length $n$ and Dyck paths of semilength $n$ is further provided in \cite[p.~1628, Proposition~9]{CDK2011}. To be specific, given a Dyck path $\cD\in \bbD_n$, the corresponding nondecreasing inversion sequence $e=(e_1,\ldots,e_n)$ is such that $e_i$ equals the number of down steps before the $i$-th up step for every $i\in [n]$.

Let $\NDPM_n$ denote the set of nondecreasing partition matrices of weight $n$, and write $\NDPM := \cup_{n\ge 1} \NDPM_n$. In Table~\ref{tab:CDK-bij-Dyck}, we list the five matrices in $\NDPM_3$ and their corresponding Dyck paths.

\begin{table}[ht!]
	\def\arraystretch{2}
	\centering
	\caption{The correspondence between $\NDPM_3$ and $\bbD_3$}\label{tab:CDK-bij-Dyck}
	\begin{tabular}{cp{.5in}cp{.5in}c}
		\hline
		partition matrix & & inversion sequence & & Dyck path\\
		\hline
		$\begin{psmallmatrix}
			\{1,2,3\}
		\end{psmallmatrix}$ & & $(0,0,0)$ & & $\vcenter{\hbox{\begin{tikzpicture}[line width=0.8pt,scale=0.3]
					\draw[thick] (0,0)--++(1,1)--++(1,1)--++(1,1)--++(1,-1)--++(1,-1)--++(1,-1);
					\fill (0,0) circle(1ex) ++(1,1)circle(1ex) ++(1,1)circle(1ex) ++(1,1)circle(1ex)
					++(1,-1)circle(1ex) ++(1,-1)circle(1ex) ++(1,-1)circle(1ex);
		\end{tikzpicture}}}$\\
		$\begin{psmallmatrix}
			\{1\} & \{2\} \\
			& \{3\}
		\end{psmallmatrix}$ & & $(0,0,1)$ & & $\vcenter{\hbox{\begin{tikzpicture}[line width=0.8pt,scale=0.3]
					\draw[thick] (0,0)--++(1,1)--++(1,1)--++(1,-1)--++(1,1)--++(1,-1)--++(1,-1);
					\fill (0,0) circle(1ex) ++(1,1)circle(1ex) ++(1,1)circle(1ex) ++(1,-1)circle(1ex)
					++(1,1)circle(1ex) ++(1,-1)circle(1ex) ++(1,-1)circle(1ex);
		\end{tikzpicture}}}$\\
		$\begin{psmallmatrix}
			\{1,2\} & \varnothing \\
			& \{3\}
		\end{psmallmatrix}$ & & $(0,0,2)$ & & $\vcenter{\hbox{\begin{tikzpicture}[line width=0.8pt,scale=0.3]
					\draw[thick] (0,0)--++(1,1)--++(1,1)--++(1,-1)--++(1,-1)--++(1,1)--++(1,-1);
					\fill (0,0) circle(1ex) ++(1,1)circle(1ex) ++(1,1)circle(1ex) ++(1,-1)circle(1ex)
					++(1,-1)circle(1ex) ++(1,1)circle(1ex) ++(1,-1)circle(1ex);
		\end{tikzpicture}}}$\\
		$\begin{psmallmatrix}
			\{1\} & \varnothing \\
			& \{2,3\}
		\end{psmallmatrix}$ & & $(0,1,1)$ & & $\vcenter{\hbox{\begin{tikzpicture}[line width=0.8pt,scale=0.3]
					\draw[thick] (0,0)--++(1,1)--++(1,-1)--++(1,1)--++(1,1)--++(1,-1)--++(1,-1);
					\fill (0,0) circle(1ex) ++(1,1)circle(1ex) ++(1,-1)circle(1ex) ++(1,1)circle(1ex)
					++(1,1)circle(1ex) ++(1,-1)circle(1ex) ++(1,-1)circle(1ex);
		\end{tikzpicture}}}$\\
		$\begin{psmallmatrix}
			\{1\} & \varnothing & \varnothing \\
			& \{2\} & \varnothing \\
			& & \{3\}
		\end{psmallmatrix}$ & & $(0,1,2)$ & & $\vcenter{\hbox{\begin{tikzpicture}[line width=0.8pt,scale=0.3]
					\draw[thick] (0,0)--++(1,1)--++(1,-1)--++(1,1)--++(1,-1)--++(1,1)--++(1,-1);
					\fill (0,0) circle(1ex) ++(1,1)circle(1ex) ++(1,-1)circle(1ex) ++(1,1)circle(1ex)
					++(1,-1)circle(1ex) ++(1,1)circle(1ex) ++(1,-1)circle(1ex);
		\end{tikzpicture}}}$\\
		\hline
	\end{tabular}
\end{table}

As such, the following enumerative relation is offered in \cite[p.~1628, Proposition~9]{CDK2011}.

\begin{theorem}[Claesson--Dukes--Kubitzke]\label{th:NDPM-Dyck-simple}
	For every $n\ge 1$, the number of nondecreasing partition matrices of weight $n$ equals the $n$-th Catalan number.
\end{theorem}

\begin{remark}\label{rmk:CDK-NDIPPM}
	It is a classical result \cite[p.~296, Item~(M6)]{DS1977} that the $n$-th Motzkin number counts Dyck paths of semilength $n$ in which no three consecutive up steps are allowed. However, as we have seen in Table~\ref{tab:CDK-bij-Dyck}, the nondecreasing \emph{improper} partition matrix $\begin{psmallmatrix}
		\{1,2,3\}
	\end{psmallmatrix}$ is mapped to a Dyck path \emph{with} three consecutive up steps under the above bijective construction. Another standard way \cite[\OEIS{A001006}]{OEIS} to interpret Motzkin numbers with restricted Dyck paths is by requiring that all valleys have even $x$-coordinate. However, with the Claesson--Dukes--Kubitzke bijection, we have
	\begin{align*}
		\begin{psmallmatrix}
			\{1\} & \{2\} & \varnothing\\
			& \varnothing & \{3,4\}\\
			& & \{5\}
		\end{psmallmatrix} \quad\xmapsto{\Pi}\quad
		(0,0,1,1,2)
		\quad\mapsto\quad \vcenter{\hbox{\begin{tikzpicture}[line width=0.8pt,scale=0.3]
					\coordinate (O) at (0,0);
					\draw[thick] (O)--++(1,1)--++(1,1)--++(1,-1)--++(1,1)--++(1,1)--++(1,-1)--++(1,1)--++(1,-1)--++(1,-1)--++(1,-1);
					\filldraw (O) circle(0.5ex) ++(1,1)circle(0.5ex) ++(1,1)circle(0.5ex) ++(1,-1)circle(0.5ex)
					++(1,1)circle(0.5ex) ++(1,1)circle(0.5ex) ++(1,-1)circle(0.5ex) ++(1,1)circle(0.5ex) ++(1,-1)circle(0.5ex) ++(1,-1)circle(0.5ex) ++(1,-1)circle(0.5ex);
		\end{tikzpicture}}}\,\,;
	\end{align*}
	here the nondecreasing partition matrix is \emph{improper}, while the first valley of the Dyck path has \emph{odd} $x$-coordinate $3$ (recall that the Dyck path starts with $x$-coordinate $0$). Due to the difficulty of characterizing improperness, it is less likely that the Claesson--Dukes--Kubitzke bijection provides a correct combinatorial mechanism toward Theorem~\ref{th:NDIPPM-Motzkin-simple}.
\end{remark}

Now we note that a closer examination of the above bijective correspondence reveals more information about the equinumerous property in Theorem~\ref{th:NDPM-Dyck-simple}.

To begin with, we define $\blk(A)$ as the number of \emph{irreducible diagonal blocks} in a given $A\in \NDPM$. For example, the five nondecreasing matrices in Table~\ref{tab:CDK-bij-Dyck} consist of $1$, $1$, $2$, $2$, $3$ irreducible diagonal block(s), respectively.

It turns out that the Claesson--Dukes--Kubitzke bijection $\Pi$ preserves $\blk(A)$ as the number of elements $e_i$ with $e_i=i-1$ in $\Pi(A) = (e_1,\ldots,e_n)$. This statistic is further preserved as the number of times the resulting Dyck path $\cD$ \emph{returns} to the $x$-axis, denoted by $\touch(\cD)$. Therefore, the following strengthening of Theorem~\ref{th:NDPM-Dyck-simple} is apparent.

\begin{theorem}[Strengthening of Theorem~\ref{th:NDPM-Dyck-simple}]\label{th:NDPM-Dyck}
	For every $n\ge 1$, the statistic $\blk$ on $\NDPM_n$ is equidistributed with the statistic $\touch$ on $\bbD_n$.
\end{theorem}

\subsection{Strengthening of Theorem~\ref{th:NDIPPM-Motzkin-simple}}\label{sec:th-NDIPPM-strong}

Let $\NDIPPM_n$ denote the set of nondecreasing improper partition matrices of weight $n$, and write $\NDIPPM := \cup_{n\ge 1} \NDIPPM_n$. The consideration of the ``$\blk$'' statistic for nondecreasing partition matrices casts light on the possibility of elaborating on Theorem~\ref{th:NDIPPM-Motzkin-simple}. In addition, we denote by $\odd(A)$ the number of nonempty entries of an \emph{odd} cardinality in a nondecreasing partition matrix $A$. Since $\NDIPPM \subset \NDPM$, both statistics $\blk(B)$ and $\odd(B)$ are also defined for every $B\in \NDIPPM$.

Instead of the restricted Dyck paths as in Remark~\ref{rmk:CDK-NDIPPM}, a better description of the Motzkin numbers comes from Motzkin paths.

\begin{definition}\label{def:M-path}
	A \emph{Motzkin path} $\cM$ is a lattice path starting from $(0,0)$, ending on the right half of the $x$-axis (i.e., a certain $(n,0)$ with $n\ge 1$), and never falling below the $x$-axis such that only \emph{up} ($\nearrow$) steps $(1,1)$, \emph{down} ($\searrow$) steps $(1,-1)$, and \emph{level} ($\rightarrow$) steps $(1,0)$ are used.
\end{definition}

Let $\bbM$ denote the set of Motzkin paths. Furthermore, the $x$-coordinate of the ending point (i.e., the number $n$) in the above definition is called the \emph{length} of this Motzkin path $\cM$, denoted by $\len(\cM)$. Accordingly, we use $\bbM_n$ to denote the set of Motzkin paths of length $n$. On certain occasions, we also need the empty path $\varnothing$ for our analysis, while we assume $\len(\varnothing) := 0$. As such, we denote $\bbM_0 := \{\varnothing\}$, and further write $\bbM_{\ge 0} := \bbM \cup \bbM_0$.

For the moment, let us use $\#_{\cM}(\nearrow)$, $\#_{\cM}(\searrow)$, and $\#_{\cM}(\rightarrow)$ to count the number of up, down, and level steps on $\cM \in \bbM_{\ge 0}$, respectively. In addition, we write
\begin{align*}
	\level(\cM) := \#_{\cM}(\rightarrow)
\end{align*}
as a statistic. It is clear that
\begin{align*}
	\len(\cM) = \#_{\cM}(\nearrow) + \#_{\cM}(\searrow) + \#_{\cM}(\rightarrow).
\end{align*}
Finally, we denote by $\comp(\cM)$ the number of steps on $\cM$ that end on the $x$-axis. Note that a level step lying on the $x$-axis is also viewed as ``ending on the $x$-axis.'' Also, we denote by $\#_{\cM}^{\x}(\searrow)$ and $\#_{\cM}^{\x}(\rightarrow)$ the number of down and level steps ending on the $x$-axis, respectively. And as before, we introduce the statistic
\begin{align*}
	\level^{\x}(\cM) := \#_{\cM}^{\x}(\rightarrow).
\end{align*}
It is again obvious from its definition that
\begin{align*}
	\comp(\cM) = \#_{\cM}^{\x}(\searrow) + \#_{\cM}^{\x}(\rightarrow).
\end{align*}

Theorem~\ref{th:NDIPPM-Motzkin-simple} can be refined as follows.

\begin{theorem}[Strengthening of Theorem~\ref{th:NDIPPM-Motzkin-simple}]\label{th:NDIPPM-Motzkin}
	For every $n\ge 1$, the pair of statistics $(\blk,\odd)$ on $\NDIPPM_n$ is equidistributed with the pair of statistics $(\comp,\level)$ on $\bbM_n$.
\end{theorem}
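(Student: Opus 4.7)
My plan is to reduce the bivariate equidistribution to a single-variable generating function identity via matched first-return decompositions, and then establish this identity both analytically and bijectively. Every $B\in\NDIPPM$ decomposes uniquely into $\blk(B)$ irreducible blocks, and every Motzkin path $\cM$ decomposes uniquely into $\comp(\cM)$ components (each being either a single level step on the $x$-axis or an up-arch over an ambient Motzkin path). Setting
$$ D(t,y) := \sum_{B \text{ irreducible}} y^{\odd(B)}\, t^{w(B)}, \qquad C(t,y) := \sum_{\cM \text{ a component}} y^{\level(\cM)}\, t^{\len(\cM)}, $$
the two trivariate generating functions in the theorem become $xD/(1-xD)$ and $xC/(1-xC)$, so it suffices to prove $D(t,y) = C(t,y)$. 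The Motzkin component decomposition gives $C = yt + t^2 \hat{M}$ with $\hat{M} = 1/(1-C)$ satisfying $t^2 \hat{M}^2 - (1-yt)\hat{M} + 1 = 0$; hence, writing $E := \sum_{B\in\NDIPPM} y^{\odd(B)} t^{w(B)} = D/(1-D)$, the goal equivalently becomes $E = \hat{M} - 1$.

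For the analytic proof, I would analyze a NDIPPM column by column. A column with $k\ge 1$ cells contributes the factor $\alpha_{k-1}(t,y) := [t^2/(1-t^2)]^{k-1}(yt+t^2)/(1-t^2)$, since the improper condition forces the first $k-1$ cells to be even-sized while the last cell is arbitrary and its parity is tracked by $y$. Adjacent columns are joined by transitions of two types (corresponding to the $R$ or $Q$ lattice-path step), and the upper-triangular constraint becomes a nonnegativity condition on the height sequence $\tilde h_j := j - r_j^{\mathrm{out}}$ that starts and ends at zero. A kernel-method computation on this constrained lattice-path model yields an algebraic equation for $E$ that simplifies to the Motzkin quadratic $t^2(E+1)^2 - (1-yt)(E+1)+1=0$; the branch with $E(0,y)=0$ is precisely $\hat{M}-1$.

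For the bijective proof, I would construct $\Phi:\NDIPPM_n\to\bbM_n$ recursively at the block level. The base cases $(\{1\})$ and $(\{1,2\})$ map to $\rightarrow$ and $\nearrow\searrow$; for an irreducible block $B$ of weight $n\ge 3$, set $\Phi(B)=\nearrow\cdot\Phi(\Psi(B))\cdot\searrow$, where $\Psi$ is an $\odd$-preserving bijection sending irreducible blocks of weight $n$ to arbitrary NDIPPMs of weight $n-2$. A candidate for $\Psi$ removes two elements from an even-sized pre-last cell in a canonically chosen column and then reorganizes the remaining cells. The main obstacle lies here: pinning down the canonical rule so that the output is a valid partition matrix still obeying the nondecreasing and improper conditions, and so that $\Psi$ is invertible. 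The analytic path is more direct but demands careful Laurent series bookkeeping in the kernel method.
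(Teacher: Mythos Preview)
Your high-level reduction---splitting both sides into irreducible blocks/components so that the trivariate identity becomes $D(t,y)=C(t,y)$---is exactly the paper's framework.

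For the analytic half, the paper takes a more direct route than your kernel method. The key observation (which your column-by-column setup circles around without landing on) is that the positions of the nonempty entries of $B\in\NDIPPM$, read in row-major order, already trace a lattice path from $(1,1)$ to $(D,D)$ using south, east, and southeast steps weakly above the diagonal; up to rotation this \emph{is} a Motzkin path of length $D-1$. Assigning to each lattice point the cardinality of the corresponding entry, the improper condition says precisely that the starting point of every south step carries even weight (factor $t^2/(1-t^2)$) while every other point is unrestricted (factor $(yt+t^2)/(1-t^2)$), and $\blk(B)-1$ counts southeast steps ending on the diagonal. Both the $\NDIPPM$ and the $\bbM$ generating functions are then specializations of a single five-variable Motzkin series $F(\mu_1,\mu_2,\mu_3,\nu,\omega)$, evaluated explicitly via Flajolet's continued fraction; the desired equality is a one-line algebraic check. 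Your kernel-method plan would work, but recognizing the row-major path as a Motzkin path is the shortcut that makes it unnecessary.

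For the bijective half, your recursion is exactly the paper's, and the paper supplies the $\Psi$ you are missing---again phrased through the weighted induced lattice path. Since $\blk(B)=1$ and $D\ge 2$, the entry $b_{D-1,D}$ is nonempty with even weight. Three cases: if $w_{\Path(B)}(D-1,D)\ge 4$, subtract $2$ there; if $w_{\Path(B)}(D-1,D)=2$ and the path reaches $(D-1,D)$ from $(D-1,D-1)$, delete the point and replace the east--south corner by a single southeast step; otherwise $w_{\Path(B)}(D-1,D)=2$ and the path reaches $(D-1,D)$ from row $D-2$, in which case one locates the last diagonal point $(s,s)$ (with $s\le D-2$) followed by an east step, slides the subpath from $(s,s+1)$ to the predecessor of $(D-1,D)$ one unit south, and reconnects the ends. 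The three image classes are distinguished by the number and location of southeast steps of $\Path(B^*)$ landing on the diagonal (zero; one, ending at $(D,D)$; at least one, the first ending before $(D,D)$), which yields injectivity, and the inverse is the evident case-by-case reversal. So the ``canonical rule'' you were searching for is: operate on the induced lattice path, not directly on the matrix entries.
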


To prove this equidistribution, we introduce weighted Motzkin paths in Section~\ref{sec:weighted-M}. Then both analytic and combinatorial treatments will be provided, as presented in Sections~\ref{sec:NDIPPM-anal} and \ref{sec:NDIPPM-comb}, respectively.

\subsection{Weighted Motzkin paths}\label{sec:weighted-M}

We have pointed out in Remark~\ref{rmk:CDK-NDIPPM} that the Claesson--Dukes--Kubitzke bijection may not be a suitable tool for nondecreasing improper partition matrices. As such, we turn to a different combinatorial strategy conceptualized in terms of weighted Motzkin paths.

We start by assigning a \emph{total order} to indices in $[D]^2$ for a certain $D\ge 1$:
\begin{align}\label{eq:index-total-order}
	\begin{array}{cccccccc}
		& (1,1) & < & (1,2) & < & \cdots & < & (1,D)\\
		< & (2,1) & < & (2,2) & < & \cdots & < & (2,D)\\
		< & \cdots\\
		< & (D,1) & < & (D,2) & < & \cdots & < & (D,D)
	\end{array}.
\end{align}
Now let us make some useful observations on nondecreasing partition matrices from Definitions~\ref{def:pm} and \ref{def:NDPM}.

\begin{fact}\label{fact:NDPM}
	For every nondecreasing partition matrix of weight $n$ and dimension $D$, the following statements are true:
	\begin{enumerate}[label={\textup{(\arabic{*})}}, leftmargin=*, labelsep=5pt, align=left, itemsep=2pt, topsep=2pt, parsep=2pt]
		\item The entries at position $(1,1)$ and $(D,D)$ are nonempty;
		
		\item Letting $(r,c)$ be the row/column index of a nonempty entry, then the next nonempty entry (according to the total order \eqref{eq:index-total-order}), if existing, must lie in one of the three positions: $(r+1,c)$, $(r,c+1)$, or $(r+1,c+1)$;
		
		\item If two consecutive numbers $i,i+1\in[n]$ belong to different nonempty entries with row/column indices $(r_1,c_1)$ and $(r_2,c_2)$, respectively, then $(r_1,c_1) < (r_2,c_2)$, and in addition, there is no nonempty entry such that its row/column index lies between $(r_1,c_1)$ and $(r_2,c_2)$.
	\end{enumerate}
\end{fact}

Parts~(1) and (2) in Fact~\ref{fact:NDPM} immediately imply a correspondence between nondecreasing partition matrices and certain lattice paths on a lattice such that the vector $(1,0)$ goes toward \emph{south} and the vector $(0,1)$ goes toward \emph{east};\footnote{Note that this lattice is different from the usual lattice in which the vector $(1,0)$ goes toward east and the vector $(0,1)$ goes toward north.} such a lattice may easily mimic the grid of indices of a matrix, as illustrated in \eqref{eq:induced-lattice} below:
\begin{align}\label{eq:induced-lattice}
	\vcenter{\hbox{\begin{tikzpicture}[scale=0.5]
				\draw (1,1) grid (4,4);
				\draw[->] (1,4) -- (4.5,4);
				\draw[->] (1,4) -- (1,0.5);
				\node[above left] at (1,4) {\tiny $O$};
				\node[above] at (4.5,4) {\tiny $y$};
				\node[left] at (1,0.5) {\tiny $x$};
				\draw[line width=1.5pt,draw=cyan,->] (1,4) -- (2,4);
				\draw[line width=1.5pt,draw=cyan,->] (1,4) -- (1,3);
				\node[above] at (2,4) {\tiny $(0,1)$};
				\node[left] at (1,3) {\tiny $(1,0)$};
	\end{tikzpicture}}}\,\,.
\end{align}
Now we construct a path in such a way that for every index in $[D]^2$ at which the entry is nonempty, we add a step from this index to the next index (according to the total order \eqref{eq:index-total-order}) featuring a nonempty entry. Then we arrive at a path starting at $(1,1)$, ending at $(D,D)$, and restricted within the region $\{(x,y)\in \mathbb{Z}^2: y \ge x \ge 1\}$ such that the \emph{south} ($\downarrow$) steps $(1,0)$, \emph{east} ($\rightarrow$) steps $(0,1)$, and \emph{southeast} ($\searrow$) steps $(1,1)$ are used.

It is notable that this process is many-to-one, as evidenced by the following example:
\begin{align*}
	\begin{psmallmatrix}
		\{1\} & \{2,3\} & \{4,5\} & \varnothing\\
		& \varnothing & \{6\} & \varnothing\\
		& & \varnothing & \{7,8\}\\
		& & & \{9,10\}
	\end{psmallmatrix} \quad\text{and}\quad \begin{psmallmatrix}
		\{1\} & \{2,3\} & \{4,5\} & \varnothing\\
		& \varnothing & \{6,7\} & \varnothing\\
		& & \varnothing & \{8\}\\
		& & & \{9,10\}
	\end{psmallmatrix} \quad\mapsto\quad
	\vcenter{\hbox{\begin{tikzpicture}[scale=0.5]
				\draw (1,1) grid (4,4);
				\foreach\i\j in {1/4, 2/4, 3/4, 3/3, 4/2, 4/1}
				{\filldraw[blue] (\i,\j) circle(0.75ex);}
				\draw[line width=2pt,draw=blue] (1,4) -- (2,4) -- (3,4) -- (3,3) -- (4,2) -- (4,1);
				\draw[draw=gray,dashed] (1,4)--(4,1);
	\end{tikzpicture}}}\,\,.
\end{align*}
However, we recall that the induced Fishburn matrix of the partition matrix in question records the cardinality of each \emph{nonempty} entry. Therefore, for each lattice point on the path, we may assign a \emph{positive} weight to keep track of the cardinality of the corresponding nonempty entry. By doing so, we arrive at a bijective correspondence in view of Part~(3) in Fact~\ref{fact:NDPM}. As an illustration, the previous example becomes
\begin{align}
	\begin{psmallmatrix}
		\{1\} & \{2,3\} & \{4,5\} & \varnothing\\
		& \varnothing & {\color{red}\{6\}} & \varnothing\\
		& & \varnothing & {\color{red}\{7,8\}}\\
		& & & \{9,10\}
	\end{psmallmatrix} &\quad\mapsto\quad
	\vcenter{\hbox{\begin{tikzpicture}[scale=0.5]
				\draw (1,1) grid (4,4);
				\foreach\i\j in {1/4, 2/4, 3/4, 3/3, 4/2, 4/1}
				{\filldraw[blue] (\i,\j) circle(0.75ex);}
				\draw[line width=2pt,draw=blue] (1,4) -- (2,4) -- (3,4) -- (3,3) -- (4,2) -- (4,1);
				\draw[draw=gray,dashed] (1,4)--(4,1);
				\node[above right] at (1,4) {\footnotesize $1$};
				\node[above right] at (2,4) {\footnotesize $2$};
				\node[above right] at (3,4) {\footnotesize $2$};
				\node[above right] at (3,3) {\footnotesize ${\color{red}1}$};
				\node[above right] at (4,2) {\footnotesize ${\color{red}2}$};
				\node[above right] at (4,1) {\footnotesize $2$};
	\end{tikzpicture}}}\,\,,\label{eq:ndpm-ex1}\\
	\begin{psmallmatrix}
		\{1\} & \{2,3\} & \{4,5\} & \varnothing\\
		& \varnothing & {\color{red}\{6,7\}} & \varnothing\\
		& & \varnothing & {\color{red}\{8\}}\\
		& & & \{9,10\}
	\end{psmallmatrix} &\quad\mapsto\quad
	\vcenter{\hbox{\begin{tikzpicture}[scale=0.5]
				\draw (1,1) grid (4,4);
				\foreach\i\j in {1/4, 2/4, 3/4, 3/3, 4/2, 4/1}
				{\filldraw[blue] (\i,\j) circle(0.75ex);}
				\draw[line width=2pt,draw=blue] (1,4) -- (2,4) -- (3,4) -- (3,3) -- (4,2) -- (4,1);
				\draw[draw=gray,dashed] (1,4)--(4,1);
				\node[above right] at (1,4) {\footnotesize $1$};
				\node[above right] at (2,4) {\footnotesize $2$};
				\node[above right] at (3,4) {\footnotesize $2$};
				\node[above right] at (3,3) {\footnotesize ${\color{red}2}$};
				\node[above right] at (4,2) {\footnotesize ${\color{red}1}$};
				\node[above right] at (4,1) {\footnotesize $2$};
	\end{tikzpicture}}}\,\,.\label{eq:ndpm-ex2}
\end{align}

\begin{remark}
	The lattice path in the above construction is isomorphic to a Motzkin path whose length equals one less than the number of nonempty entries in the nondecreasing partition matrix. This can be seen by a counterclockwise rotation of $45$ degrees.
\end{remark}

The above discussion motivates us to consider weighted Motzkin paths. As usual, for $\mathbf{s} = (s_1,s_2,\ldots,s_l)$ a finite number sequence, we denote its \emph{length} by $\ell(\mathbf{s}) := l$ and its \emph{weight} by $|\mathbf{s}|:=s_1+s_2+\cdots+s_l$.

\begin{definition}\label{def:weighted-M-path}
	A \emph{weighted Motzkin path}, denoted by $\cM(\bfx)$, is a Motzkin path $\cM\in \bbM_{\ge 0}$ associated with a weight sequence $\bfx$ of \emph{positive} integers such that $\ell(\bfx) = \len(\cM)+1$, while those $\len(\cM)+1$ positive integers represent the weights of the $\len(\cM)+1$ lattice points on $\cM$, respectively. If $\len(\cM) = l$, we index the weight sequence by $\bfx := (x_0,x_1,\ldots,x_l)$, and for each $i$, we assign $x_i$ to be the weight of the unique lattice point on $\cM$ with $x$-coordinate $i$.
\end{definition}

If a statistic is meaningful for a Motzkin path $\cM$, we transplant it to the relevant weighted Motzkin path $\cM(\bfx)$. For instance, $\len(\cM(\bfx)) := \len(\cM)$ and $\level(\cM(\bfx)) := \level(\cM)$. In addition, we denote the \emph{weight} of $\cM(\bfx)$ as $|\cM(\bfx)| := |\bfx|$, and the statistic $\odd(\cM(\bfx))$ as the number of odd weights in $\bfx$.

Piecing together the arguments gathered so far, we have the following statement.

\begin{theorem}\label{th:Xi}
	There is a bijection $\Xi$ from nondecreasing partition matrices to weighted Motzkin paths such that
	\begin{align}
		\{w,\blk,\odd\}\big(A\big) = \{|\cdot|,\level^{\x}+1,\odd\}\big(\Xi(A)\big)
	\end{align}
	for every nondecreasing partition matrix $A$.
\end{theorem}

\begin{proof}
	The bijective correspondence we have described earlier, which maps a nondecreasing partition matrix $A$ to a weighted Motzkin path $\cM(\bfx)$, is exactly the claimed $\Xi$. It is clear that $w(A) = |\cM(\bfx)|$ and $\odd(A) = \odd(\cM(\bfx))$. In addition, excluding the irreducible diagonal block at the left top corner, each new irreducible diagonal block in $A$ produces a new level step on the $x$-axis on $\cM$, and hence $\blk(A) = \level^{\x}(\cM(\bfx)) + 1$.
\end{proof}

\begin{remark}\label{rmk:Motz-word}
	Recall that each Motzkin path can be uniquely represented as a \emph{word} in the following way. First, for each of the three types of steps on a Motzkin path, we assign a \emph{letter}: \textbf{u}p steps $\rU:=(1,1)$, \textbf{d}own steps $\rD:=(1,-1)$, and \textbf{l}evel steps $\rL:=(1,0)$. Then each Motzkin path of length $l$ can be translated into a word of length $l$ consisting of the letters $\rU$, $\rD$, and $\rL$ by scanning the steps from left to right; as a concrete example, the Motzkin path
	\begin{align*}
		\vcenter{\hbox{\begin{tikzpicture}[line width=0.8pt,scale=0.7]
					\coordinate (O) at (0,0);
					\draw[thick] (O)--++(1,1)--++(1,1)--++(1,-1)--++(1,0)--++(1,-1)--++(1,0)--++(1,1)--++(1,-1);
					\filldraw (O) circle(0.5ex) ++(1,1)circle(0.5ex) ++(1,1)circle(0.5ex) ++(1,-1)circle(0.5ex)
					++(1,0)circle(0.5ex) ++(1,-1)circle(0.5ex) ++(1,0)circle(0.5ex) ++(1,1)circle(0.5ex) ++(1,-1)circle(0.5ex);
					\path (0.4,0.7) node {$\rU$} ++(1,1) node {$\rU$} ++(1.3,0) node {$\rD$} ++(0.8,-0.4) node {$\rL$} ++(1.2,-0.6) node {$\rD$} ++(0.8,-0.4) node {$\rL$} ++(0.8,0.4) node {$\rU$} ++(1.3,0) node {$\rD$};
		\end{tikzpicture}}}
	\end{align*}
	corresponds to the word $(\rU, \rU, \rD, \rL, \rD, \rL, \rU, \rD)$. Using this notation, we write a Motzkin path $\cM$ of length $l$ as
	\begin{align*}
		\cM := (\rmM_1,\rmM_2,\ldots,\rmM_l)
	\end{align*}
	with each $\rmM_i \in \{\rU,\rD,\rL\}$, and it is further possible to illustrate a weighted Motzkin path
	\begin{align*}
		\cM(\bfx) := (\rmM_1,\ldots,\rmM_l)(x_0,x_1,\ldots,x_l)
	\end{align*}
	as
	\begin{align}\label{eq:weighted-Motz-word}
		\begin{array}{ccccccccccc}
			& & & 1 & & \cdots & & \cdots & & l\\
			\cM & & & \rmM_1 & & \cdots & & \cdots & & \rmM_l\\
			\bfx & & x_0 & & x_1 & & \cdots & & x_{l-1} & & x_l
		\end{array}
	\end{align}
	with the first row, which may be omitted, labeling the indices of the steps $\rmM_i$.
\end{remark}

\begin{example}
	In terms of the mapping $\Xi$, the correspondence in \eqref{eq:ndpm-ex1} should be more formally expressed as the following weighted Motzkin path:
	\begin{align*}
		\begin{psmallmatrix}
			\{1\} & \{2,3\} & \{4,5\} & \varnothing\\
			& \varnothing & \{6\} & \varnothing\\
			& & \varnothing & \{7,8\}\\
			& & & \{9,10\}
		\end{psmallmatrix} \quad\xmapsto{\Xi}\quad \vcenter{\hbox{\begin{tikzpicture}[line width=0.8pt,scale=0.4]
					\coordinate (O) at (0,0);
					\draw[thick] (O)--++(1,1)--++(1,1)--++(1,-1)--++(1,0)--++(1,-1);
					\filldraw (O) circle(0.5ex) ++(1,1)circle(0.5ex) ++(1,1)circle(0.5ex) ++(1,-1)circle(0.5ex)
					++(1,0)circle(0.5ex) ++(1,-1)circle(0.5ex);
					\node[above] at (0,0) {\footnotesize $1$};
					\node[above] at (1,1) {\footnotesize $2$};
					\node[above] at (2,2) {\footnotesize $2$};
					\node[above] at (3,1) {\footnotesize $1$};
					\node[above] at (4,1) {\footnotesize $2$};
					\node[above] at (5,0) {\footnotesize $2$};
		\end{tikzpicture}}} \quad = \quad 
		{\setlength{\arraycolsep}{1pt}\begin{array}{ccccccccccc}
				& \rU & & \rU & & \rD & & \rL & & \rD\\
				1 & & 2 & & 2 & & 1 & & 2 & & 2
		\end{array}}\,\,.
	\end{align*}
\end{example}

Now since our goal revolves around nondecreasing \emph{improper} partition matrices, we make the following observation from Definition~\ref{def:MIP}.

\begin{fact}\label{fact:NDIPPM}
	For every nondecreasing improper partition matrix, in each column, only the last nonempty entry (i.e., the nonempty entry with the largest row index) may have an odd cardinality, while all other nonempty entries must have even cardinalities.
\end{fact}

For example, in Table~\ref{tab:CDK-bij-Dyck}, only the second nondecreasing partition matrix is not improper. As another instance, the matrix in \eqref{eq:ndpm-ex1} is improper, while the one in \eqref{eq:ndpm-ex2} is not.

Translating Fact~\ref{fact:NDIPPM} into the setting of weighted Motzkin paths $\cM(\bfx)$ according to our bijective correspondence $\Xi$, we must have an \emph{even} weight $x_i$ whenever the $(i+1)$-th step is a down step, namely, $\rmM_{i+1} = \rD$. Thus, we focus on the following collections of weighted Motzkin paths.

\begin{definition}\label{def:weighted-M}
	For $\bbM'$ a subset of $\bbM_{\ge 0}$, we denote by $[\bbM']$ the collection of restricted weighted Motzkin paths:
	\begin{align*}
		[\bbM'] := \big\{\cM(\bfx):\text{$\cM\in \bbM'$ and $x_i$ is even when $\rmM_{i+1} = \rD$}\big\},
	\end{align*}
	where we write $\cM := (\rmM_1,\rmM_2,\ldots,\rmM_l)$ and $\bfx := (x_0,x_1,\ldots,x_l)$ as before. We further denote by $[\bbM']_n$ the subset of $[\bbM']$ such that the weight $|\cM(\bfx)| = n$.
\end{definition}

The following bijective correspondence plays a central role in our mechanism.

\begin{theorem}\label{th:NDIPPM-core}
	For every $n\ge 1$, there is a bijection $\Xi_n : \NDIPPM_n \to [\bbM_{\ge 0}]_n$ such that
	\begin{align}
		\{\blk,\odd\}\big(B\big) = \{\level^{\x}+1,\odd\}\big(\Xi_n(B)\big)
	\end{align}
	for all $B\in \NDIPPM_n$.
\end{theorem}

\begin{proof}
	Restricting the mapping $\Xi$ in Theorem~\ref{th:Xi} to $\NDIPPM_n$ gives the desired $\Xi_n$.
\end{proof}

\subsection{Analytic proof of Theorem~\ref{th:NDIPPM-Motzkin}}\label{sec:NDIPPM-anal}

Our first proof of Theorem~\ref{th:NDIPPM-Motzkin} relies on a multivariate generating function for Motzkin paths. Let
\begin{align}\label{eq:F-gf}
	F(\mu_1,\mu_2,\mu_3,\nu,\omega) := 1 + \sum_{\cM\in \bbM} \mu_1^{\#_{\cM}(\nearrow)} \mu_2^{\#_{\cM}(\searrow)} \mu_3^{\#_{\cM}(\rightarrow)} \nu^{\#_{\cM}^{\x}(\searrow)} \omega^{\#_{\cM}^{\x}(\rightarrow)}.
\end{align}

\begin{lemma}
	We have
	\begin{align}\label{eq:F-gf-explicit}
		F(\mu_1,\mu_2,\mu_3,\nu,\omega) = \frac{2}{2(1 - \mu_3 \omega) - \nu \big(1 - \mu_3 - \sqrt{(1-\mu_3)^2 - 4\mu_1\mu_2}\big)}.
	\end{align}
\end{lemma}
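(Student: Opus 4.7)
The plan is to apply the standard first-return decomposition twice, once to solve an auxiliary unweighted recursion and once to account for the extra bookkeeping of the two ``on the $x$-axis'' statistics. First, introduce the auxiliary series
\begin{align*}
M = M(\mu_1,\mu_2,\mu_3) := 1 + \sum_{\cM \in \bbM} \mu_1^{\#_{\cM}(\nearrow)} \mu_2^{\#_{\cM}(\searrow)} \mu_3^{\#_{\cM}(\rightarrow)},
\end{align*}
which tracks only the three step-type statistics and includes the empty path. A standard decomposition at the first return to the $x$-axis yields the functional equation $M = 1 + \mu_3 M + \mu_1 \mu_2 M^2$, and solving while selecting the branch of the square root that makes $M$ analytic at the origin (so $M(0,0,0)=1$) gives
\begin{align*}
\mu_1 \mu_2 M = \frac{1 - \mu_3 - \sqrt{(1-\mu_3)^2 - 4\mu_1\mu_2}}{2}.
\end{align*}

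Next, decompose an arbitrary Motzkin path (possibly empty) as a concatenation of \emph{ground-level blocks}, where each block is a maximal factor that starts and ends on the $x$-axis and is a single step or an arch lifted by one unit. Each such block is either a lone level step on the $x$-axis, contributing $\mu_3 \omega$, or an \emph{arch} consisting of an initial up step, an interior part that is itself an arbitrary Motzkin path shifted upward by $1$, and a closing down step that lands on the $x$-axis. Since the interior of an arch contains no level steps on the $x$-axis and no down steps ending on the $x$-axis, an arch contributes $\mu_1 \cdot M(\mu_1,\mu_2,\mu_3) \cdot \mu_2 \nu$. Summing over sequences of ground-level blocks (with the empty sequence accounting for the $+1$ in the definition of $F$) yields
\begin{align*}
F(\mu_1,\mu_2,\mu_3,\nu,\omega) = \frac{1}{1 - \mu_3 \omega - \nu \cdot \mu_1 \mu_2 M(\mu_1,\mu_2,\mu_3)}.
\end{align*}

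Substituting the closed form for $\mu_1 \mu_2 M$ from the first step and clearing the factor of $2$ from numerator and denominator gives exactly the stated expression. The only subtle points are choosing the correct branch of the square root (fixed by requiring $M$ to be a formal power series with constant term $1$) and confirming that the empty Motzkin path, which has no steps at all, is correctly captured by the $+1$ on the right-hand side of the defining equation for $F$ and simultaneously by the empty sequence of ground-level blocks; this is routine and poses no real obstacle.
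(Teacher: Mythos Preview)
Your proof is correct and follows essentially the same route as the paper's: both derive the quadratic relation for the unweighted series (your $M$, the paper's $F(1,\mu_2,\mu_3,1,1)$), solve it, and then express $F$ as $\frac{1}{1-\mu_3\omega-\nu\mu_1\mu_2 M}$ via the ground-level decomposition. The only cosmetic difference is that the paper obtains the quadratic by specializing Flajolet's continued-fraction theorem, whereas you derive it directly from the first-return decomposition; your version is slightly more self-contained but otherwise identical in substance.
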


\begin{proof}
	In view of a result of Flajolet~\cite[p.~129, Theorem~1]{Fla1980} (see also \cite[p.~527, Theorem~A1]{Kra2001}), the generating function $F(1,\mu_2,\mu_3,\nu,\omega)$ can be expressed as a continued fraction:
	\begin{align*}
		F(1,\mu_2,\mu_3,\nu,\omega) = \cfrac{1}{1 - \mu_3 \omega -\cfrac{\mu_2 \nu}{1 - \mu_3 - \cfrac{\mu_2}{1 - \mu_3 - \cfrac{\mu_2}{1 - \mu_3 - \lastddots}}}}.
	\end{align*}
	In particular,
	\begin{align*}
		F(1,\mu_2,\mu_3,1,1) = \frac{1}{1 - \mu_3 - \mu_2 F(1,\mu_2,\mu_3,1,1)},
	\end{align*}
	from which we solve that
	\begin{align*}
		F(1,\mu_2,\mu_3,1,1) = \frac{1 - \mu_3 - \sqrt{(1-\mu_3)^2 - 4\mu_2}}{2\mu_2}.
	\end{align*}
	Here we have discarded the other solution
	\begin{align*}
		\frac{1 - \mu_3 + \sqrt{(1-\mu_3)^2 - 4\mu_2}}{2\mu_2},
	\end{align*}
	because $F(1,\mu_2,\mu_3,1,1)$ should be a power series in $\mathbb{Z}[[\mu_2,\mu_3]]$ according to the definition \eqref{eq:F-gf}, while for the above discarded solution with a plus sign, there is a singularity at $(\mu_2, \mu_3) = (0, 0)$.
	Thus,
	\begin{align*}
		F(1,\mu_2,\mu_3,\nu,\omega) &= \frac{1}{1 - \mu_3 \omega - \mu_2 \nu F(1,\mu_2,\mu_3,1,1)}\\
		&= \frac{2}{2(1 - \mu_3 \omega) - \nu \big(1 - \mu_3 - \sqrt{(1-\mu_3)^2 - 4\mu_2}\big)}.
	\end{align*}
	We then note that on a Motzkin path, up steps are always equinumerous with down steps. Hence, $F(\mu_1,\mu_2,\mu_3,\nu,\omega) = F(1,\mu_1 \mu_2,\mu_3,\nu,\omega)$, which gives the desired expression.
\end{proof}

Now we are in a position to complete the proof of Theorem~\ref{th:NDIPPM-Motzkin}.

\begin{theorem}
	We have
	\begin{align}\label{eq:NDIPPM-gf}
		\sum_{B \in \NDIPPM} y^{\blk(B)} z^{\odd(B)} t^{w(B)} = \frac{y + y z t - 2 y^2 z t - 2 y^2 t^2 - y \sqrt{(1-zt)^2 - 4t^2}}{2 - 2 y - 2 y z t + 2 y^2 z t + 2 y^2 t^2}.
	\end{align}
	In particular, Theorem~\ref{th:NDIPPM-Motzkin} is true.
\end{theorem}

\begin{proof}
	In light of Theorem~\ref{th:NDIPPM-core},
	\begin{align*}
		\sum_{B \in \NDIPPM} y^{\blk(B)} z^{\odd(B)} t^{w(B)} = \sum_{\cM(\bfx)\in [\bbM_{\ge 0}]} y^{\level^{\x}(\cM(\bfx)) + 1} z^{\odd(\cM(\bfx))} t^{|\cM(\bfx)|}.
	\end{align*}
	For the latter, we assume $\cM(\bfx)$ is of length $l$, and note that there is no parity restriction on $x_{i}$ if $\rmM_{i+1} = \rU$ or $\rL$ for $0\le i\le l-1$; in addition, there is no parity restriction on $x_{l}$. Hence, the total contribution to the weight from such $x_i$ is
	\begin{align*}
		(zt + t^2 + zt^3 + t^4 + \cdots)^{\#_{\cM}(\nearrow)+\#_{\cM}(\rightarrow)+1} = \left(\frac{zt+t^2}{1-t^2}\right)^{\#_{\cM}(\nearrow)+\#_{\cM}(\rightarrow)+1},
	\end{align*}
	where we have incorporated the ``$\odd$'' statistic. Meanwhile, $x_{i}$ must be even if $\rmM_{i+1} = \rD$ for $0\le i\le l-1$, thereby giving the contribution
	\begin{align*}
		(t^2 + t^4 + \cdots)^{\#_{\cM}(\searrow)} = \left(\frac{t^2}{1-t^2}\right)^{\#_{\cM}(\searrow)}.
	\end{align*}
	Finally, we recall that $\level^{\x}(\cM(\bfx)) = \#_{\cM}^{\x}(\rightarrow)$. Thus,
	\begin{align*}
		&\sum_{\cM(\bfx)\in [\bbM_{\ge 0}]} y^{\level^{\x}(\cM(\bfx)) + 1} z^{\odd(\cM(\bfx))} t^{|\cM(\bfx)|}\\
		&\qquad = \sum_{\cM\in \bbM_{\ge 0}}y^{\#_{\cM}^{\x}(\rightarrow)+1} \left(\frac{zt+t^2}{1-t^2}\right)^{\#_{\cM}(\nearrow)+\#_{\cM}(\rightarrow)+1} \left(\frac{t^2}{1-t^2}\right)^{\#_{\cM}(\searrow)}\\
		&\qquad = \left(y\cdot \frac{zt+t^2}{1-t^2}\right) F\left(\frac{zt+t^2}{1-t^2}, \frac{t^2}{1-t^2}, \frac{zt+t^2}{1-t^2}, 1, y\right),
	\end{align*}
	yielding \eqref{eq:NDIPPM-gf} by invoking \eqref{eq:F-gf-explicit}. Now to conclude Theorem~\ref{th:NDIPPM-Motzkin}, we notice that
	\begin{align*}
		\sum_{\cM\in \bbM} y^{\comp(\cM)} z^{\level(\cM)} t^{\len(\cM)} = F(t,t,zt,y,y) - 1.
	\end{align*}
	A direct simplification reveals that it is also identical to the right-hand side of \eqref{eq:NDIPPM-gf}, and hence confirms the desired equidistribution in Theorem~\ref{th:NDIPPM-Motzkin}.
\end{proof}

\begin{remark}
	To demonstrate the power of our mechanism involving weighted Motzkin paths, we provide an alternative proof of Theorem~\ref{th:NDPM-Dyck}. First, by virtue of Theorem~\ref{th:Xi},
	\begin{align*}
		\sum_{A \in \NDPM} y^{\blk(A)} z^{\odd(A)} t^{w(A)} = \left(y\cdot \frac{zt+t^2}{1-t^2}\right) F\left(\frac{zt+t^2}{1-t^2}, \frac{zt+t^2}{1-t^2}, \frac{zt+t^2}{1-t^2}, 1, y\right),
	\end{align*}
	yielding the explicit generating function identity
	\begin{align}\label{eq:NDPM-gf-explicit}
		&\sum_{A \in \NDPM} y^{\blk(A)} z^{\odd(A)} t^{w(A)}\notag\\
		&\qquad\qquad = \frac{y + y z t - 2 y^2 z t - 2 y^2 t^2 - y \sqrt{(1 + z t)(1 - 3 z t - 4 t^2)}}{2 - 2 y + 2 z t - 2 y z t + 2 y^2 z t + 2 y^2 t^2}.
	\end{align}
	On the other hand, Dyck paths of semilength $n$ are Motzkin paths of length $2n$ with no level steps. Hence,
	\begin{align*}
		\sum_{\cD\in \bbD} y^{\touch(\cD)} t^{\semilen(\cD)} = F(t^{1/2},t^{1/2},0,y,0) - 1 = \frac{y - 2 y^2 t - y\sqrt{1-4t}}{2 - 2 y + 2 y^2 t},
	\end{align*}
	which equals \eqref{eq:NDPM-gf-explicit} with $z = 1$. Theorem~\ref{th:NDPM-Dyck} therefore follows.
\end{remark}

\subsection{Bijective proof of Theorem~\ref{th:NDIPPM-Motzkin}}\label{sec:NDIPPM-comb}

In view of the key correspondence in Theorem~\ref{th:NDIPPM-core}, we only need to establish the following result to fulfill the desired bijective treatment of Theorem~\ref{th:NDIPPM-Motzkin}.

\begin{theorem}\label{th:NDIPPM-comb}
	For every $n\ge 1$, there is a bijection $\Phi_n : [\bbM_{\ge 0}]_n \to \bbM_n$ such that
	\begin{align}
		\{\level^{\x}+1,\odd\}\big(\cM(\bfx)\big) = \{\comp,\level\} \big(\Phi_n(\cM(\bfx))\big)
	\end{align}
	for all $\cM(\bfx)\in [\bbM_{\ge 0}]_n$.
\end{theorem}

We construct the bijection $\Phi_n$ \emph{inductively} on $n$.

For $n=1$, there is only one weighted Motzkin path $\varnothing(1)$ in $[\bbM_{\ge 0}]_1$, namely, $\cM(\bfx)$ with $\cM=\varnothing$ and $\bfx=(1)$. Now the corresponding Motzkin path in $\bbM_1$ is $\Phi_1(\varnothing(1)) := (\rL)$, the path of only one level step. It is clear that
\begin{align*}
	\{\level^{\x}+1,\odd\}\big(\varnothing(1)\big) = \{\comp,\level\} \big((\rL)\big) = \{1,1\},
\end{align*}
and hence the construction of $\Phi_1$ is completed.

For $n=2$, there are two weighted Motzkin path in $[\bbM_{\ge 0}]_2$, namely, $\varnothing(2)$ and $(\rL)(1,1)$. Now the corresponding Motzkin paths in $\bbM_2$ are $\Phi_2(\varnothing(2)) := (\rU,\rD)$ and $\Phi_2((\rL)(1,1)) := (\rL,\rL)$. It is clear that
\begin{align*}
	\{\level^{\x}+1,\odd\}\big(\varnothing(2)\big) = \{\comp,\level\} \big((\rU,\rD)\big) = \{1,0\},
\end{align*}
and
\begin{align*}
	\{\level^{\x}+1,\odd\}\big((\rL)(1,1)\big) = \{\comp,\level\} \big((\rL,\rL)\big) = \{2,2\}.
\end{align*}
Therefore, $\Phi_2$ is as desired.

Now let us assume that the bijections $\Phi_1,\ldots,\Phi_{n-1}$ are known for a certain $n\ge 3$, and we are about to construct $\Phi_n$.

Define the following subset of $\bbM_{\ge 0}$:
\begin{align*}
	\bbM_{\ge 0}^\dagger := \big\{\cM\in \bbM_{\ge 0}: \level^{\x}(\cM) = 0\big\}.
\end{align*}
A key in our construction is the following bijective mapping.

\begin{lemma}\label{le:key-dagger}
	For every $n\ge 3$, there is a bijection $\Omega_n : [\bbM_{\ge 0}^\dagger]_n \to [\bbM_{\ge 0}]_{n-2}$ with the statistic $\odd$ preserved.
\end{lemma}

\begin{proof}
	Let $\cM(\bfx)\in [\bbM_{\ge 0}^\dagger]_n$ and write the resulting weighted Motzkin path $\Omega_n(\cM(\bfx)) \in [\bbM_{\ge 0}]_{n-2}$ as $\cM'(\bfx')$. If $\cM = \varnothing$ so that $\bfx=(n)$, we define $\cM'(\bfx')$ by $\cM' = \varnothing$ and $\bfx' = (n-2)$. It is clear that $\odd(\cM(\bfx)) = \odd(\cM'(\bfx'))$, as required.
	
	In what follows, we assume that $\cM \ne \varnothing$. Writing $\cM(\bfx)$ in terms of the notation in \eqref{eq:weighted-Motz-word}:
	\begin{align*}
		\begin{array}{ccccccccc}
			& 1 & & \cdots & & \cdots & & l\\
			& \rmM_1 & & \cdots & & \cdots & & \rmM_l\\
			x_0 & & x_1 & & \cdots & & x_{l-1} & & x_l
		\end{array},
	\end{align*}
	we have $\rmM_1 = \rU$ and $\rmM_l = \rD$ since no level steps on $\cM$ are on the $x$-axis. Also, $x_{l-1}$ is even. We have two cases according to the value of $x_{l-1}$.
	
	\begin{enumerate}[label={\textup{(\arabic{*})}}, leftmargin=*, labelsep=5pt, align=left, itemsep=2pt, topsep=2pt, parsep=2pt]
		\item If $x_{l-1}\ge 4$, the resulting weighted Motzkin path $\cM'(\bfx')$ is given by
		\begin{align*}
			\begin{array}{ccccccccc}
				& 1 & & \cdots & & \cdots & & l\\
				& \rmM_1 & & \cdots & & \cdots & & \rmM_l\\
				x_0 & & x_1 & & \cdots & & x_{l-1}-2 & & x_l
			\end{array}.
		\end{align*}
		In particular, $\level^{\x}(\cM'(\bfx')) = 0$.
		
		\item If $x_{l-1}= 2$, we find the \emph{rightmost} up step starting from the $x$-axis; the existence of such an up step is guaranteed because we at least have $\rmM_1 = \rU$ starting from the $x$-axis. Define this step by $\rmM_j = \rU$. We note that this $\rmM_j$ is not the last step. Now $\cM(\bfx)$ can be explicitly written as
		\begin{align*}
			\begin{array}{ccccccccccccccc}
				& 1 & & \cdots & & \cdots & & j & & \cdots & & \cdots & & l\\
				& \rmM_1 & & \cdots & & \cdots & & \rU & & \cdots & & \cdots & & \rD\\
				x_0 & & x_1 & & \cdots & & x_{j-1} & & x_j & & \cdots & & 2 & & x_l
			\end{array}.
		\end{align*}
		Next, we observe that the subpath from $\rmM_j = \rU$ to $\rmM_l = \rD$ never touches the $x$-axis until the ending point. Thus, for $\cM$, after replacing $\rmM_j = \rU$ with a level step $\rL$ and removing $\rmM_l = \rD$, we still arrive at a Motzkin path, denoted by $\cM'$. Finally, we define the resulting weighted Motzkin path $\cM'(\bfx')$ as
		\begin{align*}
			\begin{array}{ccccccccccccccc}
				& 1 & & \cdots & & \cdots & & j & & \cdots & & \cdots & & l-1\\
				& \rmM_1 & & \cdots & & \cdots & & \rL & & \cdots & & \cdots & & \rmM_{l-1}\\
				x_0 & & x_1 & & \cdots & & x_{j-1} & & x_j & & \cdots & & x_{l-2} & & x_l
			\end{array}.
		\end{align*}
		In particular, the $j$-th step on $\cM'$ becomes the \emph{leftmost} level step on the $x$-axis, further implying that $\level^{\x}(\cM'(\bfx')) \ge 1$.
	\end{enumerate}
	It is clear that in both cases, $|\cM'(\bfx')| = |\cM(\bfx)|-2$, while the ``$\odd$'' statistic is preserved. Furthermore, this process is invertible. We therefore arrive at the desired bijection.
\end{proof}

\begin{example}
	We list the following typical examples for $\Omega_9 : [\bbM_{\ge 0}^\dagger]_9 \to [\bbM_{\ge 0}]_{7}$:
	\begin{align*}
		\begin{array}{ccccccccc}
			& \rU & & \rD & & \rU & & \rD\\
			1 & & 2 & & 1 & & {\color{red}4} & & 1
		\end{array} & \quad\mapsto\quad \begin{array}{ccccccccc}
			& \rU & & \rD & & \rU & & \rD\\
			1 & & 2 & & 1 & & {\color{red}2} & & 1
		\end{array},\\
		\\
		\begin{array}{ccccccccc}
			& \rU & & \rD & & {\color{red}\rU} & & {\color{blue}\rD}\\
			1 & & 4 & & 1 & & {\color{blue}2} & & 1
		\end{array} & \quad\mapsto\quad \begin{array}{ccccccc}
			& \rU & & \rD & & {\color{red}\rL}\\
			1 & & 4 & & 1 & & 1
		\end{array},\\
		\\
		\begin{array}{ccccccccc}
			& {\color{red}\rU} & & \rU & & \rD & & {\color{blue}\rD}\\
			1 & & 1 & & 4 & & {\color{blue}2} & & 1
		\end{array} & \quad\mapsto\quad \begin{array}{ccccccc}
			& {\color{red}\rL} & & \rU & & \rD\\
			1 & & 1 & & 4 & & 1
		\end{array},\\
		\\
		\begin{array}{ccccccccc}
			& {\color{red}\rU} & & \rL & & \rL & & {\color{blue}\rD}\\
			1 & & 1 & & 4 & & {\color{blue}2} & & 1
		\end{array} & \quad\mapsto\quad \begin{array}{ccccccc}
			& {\color{red}\rL} & & \rL & & \rL\\
			1 & & 1 & & 4 & & 1
		\end{array}.
	\end{align*}
\end{example}

For the moment, we construct $\Phi_n$ for $n\ge 3$.

\begin{enumerate}[label={\textup{(\arabic{*})}}, leftmargin=*, labelsep=5pt, align=left, itemsep=2pt, topsep=2pt, parsep=2pt]
	\item Assume $\level^{\x}(\cM(\bfx)) = 0$. We define $\Phi_n(\cM(\bfx))$ as
	\begin{align*}
		\Phi_n(\cM(\bfx)) := (\rU, \Phi_{n-2}(\Omega_n(\cM(\bfx))), \rD).
	\end{align*}
	Clearly, there is only one step ending on the $x$-axis, namely, the last step. Hence,
	\begin{align*}
		\comp(\Phi_n(\cM(\bfx))) = 1 = \level^{\x}(\cM(\bfx)) + 1.
	\end{align*}
	Furthermore,
	\begin{align*}
		\level(\Phi_n(\cM(\bfx))) = \level(\Phi_{n-2}(\Omega_n(\cM(\bfx))) = \odd(\Omega_n(\cM(\bfx))) = \odd(\cM(\bfx)),
	\end{align*}
	where we have applied the inductive assumption on $\Phi_{n-2}$ for the second equality and Lemma~\ref{le:key-dagger} for the last equality.
	
	\item Assume $\level^{\x}(\cM(\bfx)) \ge 1$. This time we can uniquely decompose $\cM(\bfx)$ by weighted Motzkin paths $\cM^{(1)}(\bfx^{(1)}),\ldots,\cM^{(k)}(\bfx^{(k)})$ of smaller weight in $[\bbM_{\ge 0}^\dagger]$; here
	\begin{align*}
		k = \level^{\x}(\cM(\bfx)) + 1.
	\end{align*}
	Explicitly, $\cM(\bfx)$ is represented as
	\begin{align*}
		\scalebox{0.9}{%
			$\begin{array}{ccccccccccccc}
				& \rmM_1^{(1)} & \cdots & \rmM_{l^{(1)}}^{(1)} & & \fbox{$\rL$} & \cdots & \fbox{$\rL$} & & \rmM_1^{(k)} & \cdots & \rmM_{l^{(k)}}^{(k)}\\
				x_0^{(1)} & & \cdots & & x_{l^{(1)}}^{(1)} & & \cdots & & x_0^{(k)} & & \cdots & & x_{l^{(k)}}^{(k)}
			\end{array}$
		}.
	\end{align*}
	Here the boxed \fbox{$\rL$} are exactly the level steps on $\cM(\bfx)$ lying on the $x$-axis. Since each $\cM^{(i)}(\bfx^{(i)})$ has a smaller weight than $\cM(\bfx)$, it is true that $|\bfx^{(i)}| < n$ so that the bijection $\Phi_{|\bfx^{(i)}|}$ has been known according to the inductive assumption. We then map each $\cM^{(i)}(\bfx^{(i)})$ to a Motzkin path by $\Phi_{|\bfx^{(i)}|}(\cM^{(i)}(\bfx^{(i)}))$; in particular,
	\begin{align*}
		\{\comp,\level\} \big(\Phi_{|\bfx^{(i)}|}(\cM^{(i)}(\bfx^{(i)}))\big) = \{1, \odd(\cM^{(i)}(\bfx^{(i)}))\},
	\end{align*}
	where for the ``$\comp$'' statistic we have used the fact that $\cM^{(i)}(\bfx^{(i)})\in [\bbM_{\ge 0}^\dagger]$ so that $\level^{\x}(\cM^{(i)}(\bfx^{(i)})) = 0$. Now $\Phi_n(\cM(\bfx))$ is defined as
	\begin{align*}
		\Phi_n(\cM(\bfx)) := (\Phi_{|\bfx^{(1)}|}(\cM^{(1)}(\bfx^{(1)})),\ldots,\Phi_{|\bfx^{(k)}|}(\cM^{(k)}(\bfx^{(k)}))).
	\end{align*}
	We have
	\begin{align*}
		\comp(\Phi_n(\cM(\bfx))) &= \sum_{i=1}^k \comp(\Phi_{|\bfx^{(i)}|}(\cM^{(i)}(\bfx^{(i)}))) = \sum_{i=1}^k 1 = k\\
		& = \level^{\x}(\cM(\bfx)) + 1,
	\end{align*}
	and
	\begin{align*}
		\level(\Phi_n(\cM(\bfx))) &= \sum_{i=1}^k \level(\Phi_{|\bfx^{(i)}|}(\cM^{(i)}(\bfx^{(i)}))) = \sum_{i=1}^k \odd(\cM^{(i)}(\bfx^{(i)}))\\
		& = \odd(\cM(\bfx)),
	\end{align*}
	as desired.
\end{enumerate}

\begin{example}
	We determine $\Phi_3$ completely. There are four weighted Motzkin paths $\cM(\bfx)$ in $[\bbM_{\ge 0}]_3$, namely, $\varnothing(3)$, $(\rL)(1,2)$, $(\rL)(2,1)$, and $(\rL,\rL)(1,1,1)$.
	\begin{itemize}[itemindent=*, leftmargin=*, itemsep=2pt, topsep=2pt, parsep=2pt]
		\item For $\varnothing(3)$, we have $\level^{\x}(\varnothing(3)) = 0$. Now apply $\Omega_3$ to $\varnothing(3)$ and get $\Omega_3(\varnothing(3)) = \varnothing(1)$. Then,
		\begin{align*}
			\Phi_3(\varnothing(3)) = (\rU,\Phi_1(\varnothing(1)),\rD) = (\rU,\rL,\rD).
		\end{align*}
		
		\item For $(\rL)(1,2)$, we have $\level^{\x}((\rL)(1,2)) = 1$. Now decompose it by two weighted Motzkin paths in $[\bbM_{\ge 0}^\dagger]$, namely, $\varnothing(1)$ and $\varnothing(2)$. Then,
		\begin{align*}
			\Phi_3((\rL)(1,2)) = (\Phi_1(\varnothing(1)),\Phi_2(\varnothing(2))) = (\rL,\rU,\rD).
		\end{align*}
		
		\item For $(\rL)(2,1)$, we have $\level^{\x}((\rL)(2,1)) = 1$. Now decompose it by two weighted Motzkin paths in $[\bbM_{\ge 0}^\dagger]$, namely, $\varnothing(2)$ and $\varnothing(1)$. Then,
		\begin{align*}
			\Phi_3((\rL)(2,1)) = (\Phi_2(\varnothing(2)),\Phi_1(\varnothing(1))) = (\rU,\rD,\rL).
		\end{align*}
		
		\item For $(\rL,\rL)(1,1,1)$, we have $\level^{\x}((\rL,\rL)(1,1,1)) = 2$. Now decompose it by three weighted Motzkin paths in $[\bbM_{\ge 0}^\dagger]$, namely, $\varnothing(1)$, $\varnothing(1)$, and $\varnothing(1)$. Then,
		\begin{align*}
			\Phi_3((\rL,\rL)(1,1,1)) = (\Phi_1(\varnothing(1)),\Phi_1(\varnothing(1)),\Phi_1(\varnothing(1))) = (\rL,\rL,\rL).
		\end{align*}
	\end{itemize}
\end{example}

\begin{example}
	We determine $\Phi_7((\rU,\rD,\rL)(2,2,1,2))$. Note that
	\begin{align*}
		\level^{\x}((\rU,\rD,\rL)(2,2,1,2)) = 1.
	\end{align*}
	Now decompose it by two weighted Motzkin paths in $[\bbM_{\ge 0}^\dagger]$, namely, $(\rU,\rD)(2,2,1)$ and $\varnothing(2)$. Then it is necessary to determine $\Phi_5((\rU,\rD)(2,2,1))$ in advance. For this, note that $\level^{\x}((\rU,\rD)(2,2,1)) = 0$. Apply $\Omega_5$ to $(\rU,\rD)(2,2,1)$ and get $\Omega_5((\rU,\rD)(2,2,1)) = (\rL)(2,1)$. Then,
	\begin{align*}
		\Phi_5((\rU,\rD)(2,2,1)) = (\rU, \Phi_3((\rL)(2,1)), \rD) = (\rU, \rU, \rD, \rL, \rD).
	\end{align*}
	Finally,
	\begin{align*}
		\Phi_7((\rU,\rD,\rL)(2,2,1,2)) = (\Phi_5((\rU,\rD)(2,2,1)), \Phi_2(\varnothing(2))) = (\rU, \rU, \rD, \rL, \rD, \rU, \rD).
	\end{align*}
\end{example}

\section{Outlook}\label{sec:outlook}

In Theorem~\ref{thm:q=-1}, our attention is paid to the evaluation of the polynomials $S_n(q) = \sum_{P\in \MatP_{n}} q^{\inv(P)}$ at $q=-1$. Meanwhile, we have computed the exact expressions of $S_n(q)$ for some small $n$:
\begin{align*}
	S_1(q) &= 1,\\
	S_2(q) &= 2,\\
	S_3(q) &= 5 + q,\\
	S_4(q) &= 15 + 7 q + 2 q^2,\\
	S_5(q) &= 53 + 41 q + 20 q^2 + 5 q^3 + q^4,\\
	S_6(q) &= 217 + 240 q + 161 q^2 + 68 q^3 + 24 q^4 + 8 q^5 + 2 q^6,\\
	S_7(q) &= 1014 + 1475 q + 1253 q^2 + 716 q^3 + 334 q^4 + 154 q^5 + 62 q^6 + 22 q^7 + 9 q^8 + q^9,\\
	S_8(q) &= 5335 + 9677 q + 9950 q^2 + 7066 q^3 + 4034 q^4 + 2192 q^5 + 1098 q^6 + 527 q^7 \\
	&\quad + 271 q^8 + 108 q^9 + 40 q^{10} + 18 q^{11} + 4 q^{12}.
\end{align*}
The following questions are then straightforward.

\begin{question}\label{ques:Sn-closed-expr}
	Is there a closed expression for the generating series $\sum_{n\ge 1} S_n(q) t^n$?
\end{question}

\begin{question}
	For each $n\ge 1$, are the coefficients in the polynomial $S_n(q)\in \mathbb{N}[q]$ unimodal? Moreover, do these coefficients satisfy a certain distributional law when $n$ is sufficiently large?
\end{question}

In addition, since partition matrices are equinumerous with inversion sequences (or permutations), one of the referees suggested the following direction for exploration.

\begin{question}
	Is there a ``natural'' statistic for inversion sequences or permutations such that it is equidistributed with the statistic $\inv$ for partition matrices?
\end{question}

Along this line, some further insights arise naturally. Let us call any set of combinatorial objects enumerated by the Fishburn number $\Fis_n$ (resp.~the factorial $n!$) a \emph{Fishburn structure} (resp.~a \emph{factorial structure}). As witnessed by \eqref{eq:S0} and \eqref{eq:S1}, our polynomial $S_n(q)$ interpolates between a Fishburn structure (namely, $\MatF_n$) and a factorial structure (namely, $\MatP_n$). Recall that there are at least four other Fishburn structures that have been well studied, namely, the $(\mathbf{2}+\mathbf{2})$-free unlabeled posets (also known as the interval orders), the ascent sequences, the Fishburn permutations, and the Stoimenow matchings; see \cite{BCDK2010} for undefined terms. More notably, each of these Fishburn structures admits a superset that is a factorial structure. Now the following question seems pertinent.

\begin{question}
	Does there exist a ``natural'' statistic on any of the four remaining factorial structures, such that the refined enumeration according to this statistic gives rise to an alternative interpretation of $S_n(q)$? Or from a different perspective, do any of these other four structures possess natural interweaving $q$-enumeration such that the signed (i.e., $q=-1$) counting leads to further findings?
\end{question}

Lastly, a key concept we introduce in this work is improper partition matrices. As we have discussed in Remarks~\ref{rmk:no-bij} and \ref{rmk:CDK-NDIPPM}, the natural Claesson--Dukes--Kubitzke bijection does not map improper partition matrices to the restricted inversion sequences considered in Theorem~\ref{thm:q=-1}. Examinations on larger improper partition matrices indicate that the image may behave chaotically. Thus, it is meaningful, as pointed out by one of the referees, to consider the following question.

\begin{question}
	Find a characterization (possibly in terms of pattern avoidance) of the inversion sequences produced by improper partition matrices under the action of the Claesson--Dukes--Kubitzke bijection.
\end{question}

Recall also that our proof of Theorem~\ref{th:MIP-I} builds on several known results such as the Frobenius identity \eqref{eq:Frob} and is analytic in nature. It remains an open problem to find a direct statistic-preserving bijection between $\IPPM_n$ and $\bI_n(-,-,=)$. To that end, we make some further observations here, which hopefully would motivate the interested reader to pursue this question. First, let us split both $\IPPM_n$ and $\bI_n(-,-,=)$ into two complementary subsets respectively. For every $n\ge 1$, we decompose $\IPPM_n$ as the \emph{disjoint union} $\IPPM_n^{+}\sqcup \IPPM_n^{-}$, where
\begin{align*}
	\IPPM_n^{+} &:= \{Q\in\IPPM_n:\text{$n$ belongs to a subset in $Q$ of an even cardinality}\},\\
	\IPPM_n^{-} &:= \{Q\in\IPPM_n:\text{$n$ belongs to a subset in $Q$ of an odd cardinality}\}.
\end{align*}
In a similar vein, we split $\bI_n(-,-,=)$ into two disjoint subsets $\bI_n^{+}(-,-,=)$ and $\bI_n^{-}(-,-,=)$, where
\begin{align*}
	\bI_n^{+}(-,-,=) &:= \{e\in\bI_n(-,-,=): e_{n-1}=e_n\},\\
	\bI_n^{-}(-,-,=) &:= \{e\in\bI_n(-,-,=): e_{n-1}\neq e_n\}.
\end{align*}
We observe the following correlations between the $+/-$ subsets.

\begin{lemma}\label{lem:even=odd}
	For every $n\ge 1$, there exist bijections
	\begin{align*}
		\phi_n:\,\IPPM_n^- \to \IPPM_{n+1}^+,
	\end{align*}
	and
	\begin{align*}
		\psi_n:\,\bI_n^-(-,-,=) \to \bI_{n+1}^+(-,-,=),
	\end{align*}
	such that for every matrix $Q\in\IPPM_n^-$, we have
	\begin{align}
		\sw(\phi_n(Q))=\sw(Q),\label{eq:sw e=o}
	\end{align}
	while for every inversion sequence $e\in\bI_n^-(-,-,=)$, we have
	\begin{align}
		\dist(\psi_n(e))=\dist(e).\label{eq:dist e=o}
	\end{align}
\end{lemma}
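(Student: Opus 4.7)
The plan is to define both bijections by the most natural ``extension.'' For $\phi_n$, given any $Q \in \IPPM_n^-$, I would simply adjoin $n+1$ to the subset of $Q$ that contains $n$---that is, place $n+1$ in the same cell of the matrix as $n$. For $\psi_n$, given any $e \in \bI_n^-(-,-,=)$, I would duplicate the last coordinate and set $\psi_n(e) := (e_1, \ldots, e_n, e_n)$. Both maps come equipped with obvious candidate inverses, namely removing $n+1$ from its subset, or truncating the last coordinate.

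For $\phi_n$ I would first analyse the column of $n$. Because $Q$ is improper, a parity argument in the same spirit as the one appearing in the proof of Theorem~\ref{thm:MIP=S(-1)} shows that within any column the elements are forced to decompose into consecutive pairs $(c_{\min}+2k, c_{\min}+2k+1)$ each sharing a single subset, plus possibly a trailing singleton at the maximum of the column. Consequently, the subset containing $n$ has odd cardinality if and only if the total column count $n_c$ of the column of $n$ is odd, which recasts the condition $Q \in \IPPM_n^-$ as simply ``$n_c$ is odd.'' With this in hand, I would verify in turn that: (i) the augmented matrix is a valid partition matrix on $[n+1]$, using that $n$ (and hence $n+1$) occupies the rightmost column of $Q$; (ii) improperness is preserved, since the only new neighbouring pair to examine is $(n, n+1)$, which now lies in a single cell and therefore contributes neither an ascent nor a descent, while all other ascents and descents, together with their $c_{\min}$ values, remain unchanged; (iii) the image lies in $\IPPM_{n+1}^+$, because $|S|$ increases from odd to even; and (iv) the semi-weight is preserved, since only the column of $n$ is affected and $\lceil n_c/2 \rceil = \lceil (n_c+1)/2 \rceil$ when $n_c$ is odd, yielding \eqref{eq:sw e=o}. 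The inverse direction follows by the companion parity observation: for $Q' \in \IPPM_{n+1}^+$ the subset $S'$ containing $n+1$ is of even cardinality, forcing the column count of $n+1$ to be even, whence $n \in S'$, so that removing $n+1$ from $S'$ returns us to $\IPPM_n^-$.

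The map $\psi_n$ is considerably easier. The appended coordinate $e_n$ satisfies $e_n < n < n+1$, so the range constraint of inversion sequences trivially holds; the new instance of the $(-,-,=)$ condition reduces to ruling out any $i < j \le n$ with $e_i = e_n$, which is ensured for $i \le n-2$ by $e \in \bI_n(-,-,=)$ and for $i = n-1$ by the hypothesis $e_{n-1} \ne e_n$. The equality \eqref{eq:dist e=o} is immediate because $e_n$ already occurs among the coordinates of $e$. For the inverse, if $e' \in \bI_{n+1}^+(-,-,=)$ then $e'_n = e'_{n+1}$, and the $(-,-,=)$ condition applied to the triple $i=n-1$, $j=n$, $k=n+1$ forces $e'_{n-1} \ne e'_n$, so the truncation $(e'_1, \ldots, e'_n)$ lies in $\bI_n^-(-,-,=)$.

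The main obstacle will be the structural bookkeeping inside $\phi_n$: one must pin down that the parity of $|S|$ matches the parity of the column count $n_c$, which rests on the forced sharing of consecutive pairs inside each column of an improper matrix. Once this parity correspondence is in place, every remaining verification---validity, improperness preservation, semi-weight invariance, and invertibility---reduces to short routine checks.
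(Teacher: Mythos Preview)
Your proposal is correct and follows essentially the same construction as the paper: both $\phi_n$ and $\psi_n$ are defined exactly as you describe (append $n+1$ to the subset containing $n$, respectively duplicate the final coordinate), with the same inverses. Your write-up is in fact more thorough than the paper's, which leaves the parity correspondence between $|S|$ and the column count $n_c$ implicit when invoking $\lceil m/2\rceil = \lceil (m+1)/2\rceil$; your explicit derivation of this from the improperness condition is a genuine addition.
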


\begin{proof}
	Given a matrix $Q\in \IPPM_n^-$, we can append $n+1$ to the subset containing $n$ to get a matrix $\widetilde{Q}\in\IPPM_{n+1}^+$, which we take to be the image $\phi_n(Q)$. Conversely, given a matrix $\widetilde{Q}\in\IPPM_{n+1}^+$, we can remove its maximal letter $n+1$ to recover its preimage $Q\in\IPPM_n^-$ under $\phi_n$. These two operations are clearly inverses of each other, and \eqref{eq:sw e=o} follows from the fact that $\lceil\frac{m}{2}\rceil=\lceil\frac{m+1}{2}\rceil$ for any odd integer $m$.
	
	To construct $\psi_n$, we note that for any inversion sequence $e\in\bI_n^-(-,-,=)$, its last entry $e_n$ is not repeated so appending another copy of $e_n$ to its end produces an inversion sequence $\tilde{e}\in\bI_{n+1}^+(-,-,=)$, which we take to be the image $\psi_n(e)$. Conversely, removing the last entry of an inversion sequence $\tilde{e}\in\bI_{n+1}^+(-,-,=)$ does recover its preimage $e\in \bI_n^-(-,-,=)$ under $\psi_n$. Defined in this way, $\psi_n$ is seen to be a bijection and the relation \eqref{eq:dist e=o} follows directly.
\end{proof}

Combining Lemma~\ref{lem:even=odd} with the two decompositions presented earlier, we obtain the following relations between the bivariate generating functions:
\begin{align*}
	\sum_{n\ge 1}t^n\sum_{Q\in\IPPM_n}z^{\sw(Q)} = (1+t)\sum_{n\ge 1}t^n\sum_{Q\in\IPPM_n^-}z^{\sw(Q)},
\end{align*}
and
\begin{align*}
	\sum_{n\ge 1}t^n\sum_{e\in\bI_n(-,-,=)}z^{\dist(e)} = (1+t)\sum_{n\ge 1}t^n\sum_{e\in\bI_n^-(-,-,=)}z^{\dist(e)}.
\end{align*}
Consequently, our inquiry for a direct bijective proof of Theorem~\ref{th:MIP-I} can be equivalently rephrased as follows.

\begin{question}
	For every $n\ge 1$, find a direct bijection $\rho_n:\IPPM_n^- \to \bI_n^-(-,-,=)$ such that
	\begin{align*}
		\sw(Q) = \dist(\rho_n(Q))
	\end{align*}
	for all matrices $Q\in\IPPM_n^-$.
\end{question}

\subsection*{Acknowledgements}

Shane Chern was fully supported by the Austrian Science Fund (grant no.~10.55776/F1002). Shishuo Fu was partially supported by the Fundamental Research Funds for the Central Universities (grant no.~2025CDJ-IAISYB-008) and the National Natural Science Foundation of China (grant no.~12171059).

\bibliographystyle{amsplain}

\begin{thebibliography}{99}
	
	\bibitem{And1994}
	G. E. Andrews, Plane partitions. V. The TSSCPP conjecture, \textit{J. Combin. Theory Ser. A} \textbf{66} (1994), no. 1, 28--39.
	
	\bibitem{And1998}
	G. E. Andrews, \textit{The Theory of Partitions}, Cambridge University Press, Cambridge, 1998.
	
	\bibitem{AJ2014}
	G. E. Andrews and V. Jel\'inek, On $q$-series identities related to interval orders, \textit{European J. Combin.} \textbf{39} (2014), 178--187.
	
	\bibitem{BCDK2010}
	M. Bousquet-M\'elou, A. Claesson, M. Dukes, and S. Kitaev, $(2+2)$-free posets, ascent sequences and pattern avoiding permutations, \textit{J. Combin. Theory Ser. A} \textbf{117} (2010), no. 7, 884--909.
	
	\bibitem{Bre2010}
	D. M. Bressoud, Combinatorial analysis, in: \textit{NIST Handbook of Mathematical Functions}, 617--636, U.S. Dept. Commerce, Washington, DC, 2010.
	
	\bibitem{CJL2019}
	W. Cao, E. Y. Jin, and Z. Lin, Enumeration of inversion sequences avoiding triples of relations, \textit{Discrete Appl. Math.} \textbf{260} (2019), 86--97.
	
	\bibitem{CYZ2019}
	D. Chen, S. H. F. Yan, and R. D. P. Zhou, Equidistributed statistics on Fishburn matrices and permutations, \textit{Electron. J. Combin.} \textbf{26} (2019), no. 1, Paper No. 1.11, 14 pp.
	
	\bibitem{CDK2011}
	A. Claesson, M. Dukes, and M. Kubitzke, Partition and composition matrices, \textit{J. Combin. Theory Ser. A} \textbf{118} (2011), no. 5, 1624--1637.
	
	\bibitem{DS1977}
	R. Donaghey and L. W. Shapiro, Motzkin numbers, \textit{J. Combinatorial Theory Ser. A} \textbf{23} (1977), no. 3, 291--301.
	
	\bibitem{Fis1970}
	P. C. Fishburn, Intransitive indifference with unequal indifference intervals, \textit{J. Mathematical Psychology} \textbf{7} (1970), 144--149.
	
	\bibitem{Fis1983}
	P. C. Fishburn, Interval lengths for interval orders: a minimization problem, \textit{Discrete Math.} \textbf{47} (1983), no. 1, 63--82.
	
	\bibitem{Fis1985}
	P. C. Fishburn, \textit{Interval Orders and Interval Graphs. A Study of Partially Ordered Sets}, John Wiley \& Sons, Ltd., Chichester, 1985.
	
	\bibitem{Fla1980}
	P. Flajolet, Combinatorial aspects of continued fractions, \textit{Discrete Math.} \textbf{32} (1980), no. 2, 125--161.
	
	\bibitem{FS1970}
	D. Foata and M.-P. Sch\"utzenberger, \textit{Th\'eorie G\'eom\'etrique des Polyn\^omes Eul\'eriens}, Springer-Verlag, Berlin-New York, 1970.
	
	\bibitem{Fro1910}
	G. Frobenius, \"Uber die Bernoullischen Zahlen und die Eulerschen Polynome, \textit{Sitze Berichte Preuss. Akad. Wiss.} (1910), 808--847.
	
	\bibitem{Kra2001}
	C. Krattenthaler, Permutations with restricted patterns and Dyck paths, \textit{Adv. in Appl. Math.} \textbf{27} (2001), no. 2-3, 510--530.
	
	\bibitem{Kup2002}
	G. Kuperberg, Symmetry classes of alternating-sign matrices under one roof, \textit{Ann. of Math. (2)} \textbf{156} (2002), no. 3, 835--866.
	
	\bibitem{Lev2013}
	P. Levande, Fishburn diagrams, Fishburn numbers and their refined generating functions, \textit{J. Combin. Theory Ser. A} \textbf{120} (2013), no. 1, 194--217.
	
	\bibitem{MS2018}
	M. Martinez and C. D. Savage, Patterns in inversion sequences II: inversion sequences avoiding triples of relations, \textit{J. Integer Seq.} \textbf{21} (2018), no. 2, Art. 18.2.2, 44 pp.
	
	\bibitem{OEIS}
	N. J. A. Sloane, On-Line Encyclopedia of Integer Sequences, available at \url{http://oeis.org}.
	
	\bibitem{Sta1986}
	R. P. Stanley, A baker's dozen of conjectures concerning plane partitions, in: \textit{Combinatoire \'Enum\'erative (Montreal, Que., 1985/Quebec, Que., 1985)}, 285--293, Springer, Berlin, 1986.
	
	\bibitem{Sto1998}
	A. Stoimenow, Enumeration of chord diagrams and an upper bound for Vassiliev invariants, \textit{J. Knot Theory Ramifications} \textbf{7} (1998), no. 1, 93--114.
	
	\bibitem{Zag2001}
	D. Zagier, Vassiliev invariants and a strange identity related to the Dedekind eta-function, \textit{Topology} \textbf{40} (2001), no. 5, 945--960.
	
	\bibitem{Zag2010}
	D. Zagier, Quantum modular forms, in: \textit{Quanta of Maths}, 659--675, Amer. Math. Soc., Providence, RI, 2010.
	
\end{thebibliography}

\end{document}